\newcommand{\df}{\mathcal{E}}
\newcommand{\dom}{\mathcal{D}}
\newcommand{\E}{\mathbb{E}}
\newcounter{cprop}[section]
\newcommand{\ti}[1]{\tilde{ #1 }}
\newcommand{\mcB}{\mathcal{B}}
\newtheorem{hypothesis}[cprop]{Hypothesis}
\newtheorem{definition}[cprop]{Definition}
\newtheorem{remark}[cprop]{Remark}
\newtheorem{lemma}[cprop]{Lemma}
\newtheorem{proposition}[cprop]{Proposition}
\newtheorem{theorem}[cprop]{Theorem}
\newtheorem{corollary}[cprop]{Corollary}
\newtheorem{example}[cprop]{Example}
\title{\bf The global random attractor for a class of stochastic porous media equations}
\author{
\textbf{Wolf-J\"urgen Beyn}
\thanks{Supported by the DFG through SFB-701.}
\\
\small{Faculty of Mathematics, University of Bielefeld, Germany} \\
\small{beyn@math.uni-bielefeld.de} 
\\
\textbf{Benjamin Gess}
\thanks{Supported by DFG-Internationales Graduiertenkolleg Stochastics and Real World Models, the SFB-701 and the BiBoS-Research Center.} \\
\small{Faculty of Mathematics, University of Bielefeld, Germany} \\
\small{bgess@math.uni-bielefeld.de}
\\
\textbf{Paul Lescot }
\\
\small{Laboratoire de Math\'ematiques Rapha\"el Salem, CNRS, UMR 6085, Universit\'e de Rouen, France} \\
\small{paul.lescot@univ-rouen.fr}
\\
\textbf{Michael R\"ockner }
\thanks{Supported by the DFG through SFB-701 and IRTG 1132 as well as the BIBOS-Research Center. } \\
\small{Faculty of Mathematics, University of Bielefeld, Germany}\\ 
\small{and}\\
\small{Department of Mathematics and Statistics, Purdue University, U.S.A.} \\
\small{roeckner@math.uni-bielefeld.de}
}
\date{February 7 2010}
\begin{document}
 
\maketitle

\begin{abstract}
 We prove new $L^2$-estimates and regularity results for generalized porous media equations ``shifted by'' a function-valued Wiener path. To include Wiener paths with merely first spatial (weak) derivates we introduce the notion of ``$\zeta$-monotonicity'' for the non-linear function in the equation. As a consequence we prove that stochastic porous media equations have global random attractors. In addition, we show that (in particular for the classical stochastic porous media equation) this attractor consists of a random point.
\end{abstract}

\noindent {\bf 2000 Mathematics Subject Classification AMS}: Primary: 76S05, 60H15, Secondary: 37L55, 35B41.
%60H15 (SPDEs), 37L55 (infinite dim rds), Secondary: 35B41 (attractors (in pde)), 76S05 (flows in porous media)

\noindent {\bf Key words }: Stochastic porous medium equation, Random attractor, Random dynamical systems, Stochastic partial differential equations.

\bigskip
\maketitle

%------------------------------------------------------------------------------------
%------------------------ Introduction and setting -----------------------------------
%------------------------------------------------------------------------------------

\section{Introduction}
In recent years there has been quite an interest in random attractors for stochastic partial differential equations. We refer e.g. to \cite{cgs2008}, \cite{clr}, \cite{cs2005}, \cite{cf2}, \cite{cf1}, \cite{fs1996}, \cite{KL07}, \cite{s1991}, but this list is far from being complete. The study of a new class of stochastic partial differential equations, namely stochastic porous media equations was initiated in \cite{dpr2004} and further developed in \cite{dprrw}, as well as in a number of subsequent papers (see Sect. 1 below for a more complete list). So far, however, random attractors for stochastic porous media equations have not been investigated.\\ \\
The purpose of this paper is to analyze or even determine the random attractor (in the sense of \cite{c2}, \cite{cf2}, \cite{cf1}) of a stochastic porous medium equation over a bounded open set $\Lambda \subset \R^d$ of type 
\begin{equation}\label{e1.0}
dX_t = \Delta(\Phi(X_t))dt + QdW_t, \ t\ge s, 
\end{equation}
where  $t,s \in \R$, $\Delta$ is the Laplacian, $\Phi: \mathbb{R} \rightarrow \mathbb{R}$ is continuous, $\Phi(0)=0$, and $\Phi$ satisfies certain coercivity conditions and $(W_t)_{t \ge 0}$ is a function valued Wiener process on a probability space $(\O,\F,P)$.
\vskip10pt
To state our results precisely, we need to recall some of the underlying notions and describe the set-up.
This we shall do in Section 1 below. Here we only briefly describe some of the main analytic results we have obtained and which are crucial for the probabilistic part, more precisely, for the proof of the existence of a global (compact) random attractor for \eqref{e1.0}.
\vskip10pt
\noindent As explained in detail in the next section a fundamental property to be established is the cocycle property for the random dynamical system (cf. \cite{a1,IS01}) given by the solutions to \eqref{e1.0} for all $\o \in \O$ (outside a set of $\P$-measure zero), all times $s,t \in \R$ and all initial conditions $x \in H$ (= the Hilbert space carrying the solution-paths to \eqref{e1.0}).\\
Therefore, we have to restrict to additive noise and transform equation \eqref{e1.0} by the usual change of variables
\begin{equation*}
 Z_t := X_t - QW_t (\o)
\end{equation*}
to the equation
\begin{equation}\label{0.3}
 dZ_t = \Delta \Phi (Z_t + QW_t(\o))dt, \quad t \ge s,
\end{equation}
for $\o \in \O$ fixed, i.e. to a deterministic partial differential equation with time dependent nonlinear coefficient and fixed parameter $\o \in \O$. The analysis of this equation is hence purely analytic. Our main results are the regularity Lemma 3.3 and the estimate on the $L^2$-norm of the solution to \eqref{0.3} in Theorem 3.1. These results are crucial for the existence proof of a random attractor for \eqref{e1.0} and in particular the latter gives an explicit control of the $\o$-dependence. To get this estimate on the $L_2$-norm of the solution to \eqref{0.3} we introduce the new notion of ``$\zeta$-weak monotonicity'' (cf. Hypothesis 1.1 below) for the function $\Phi$, which seems to be exactly appropriate for our purposes. We distinguish two cases, namely $QW_t \in H_0^{2,p+1}(\L)$ and the much harder case when $QW_t \in H_0^{1,p+1}(\L)$. For details we refer to Sections 2 and 3 below. We would, however, like to emphasize that these analytic results are of interest in themselves and bear potential for further applications besides merely the analysis of random attractors.\\
On the basis of the estimates obtained in Sections 2 and 3 we can then use a standard result from \cite{cf1} to prove the existence of a global (compact) random attractor for \eqref{e1.0} in Section 4.\\
In Section 5 under a different (more restrictive, see Remark \ref{rmk:strong_monotone}) set of assumptions on $\Phi$ we prove that the random attractor exists and is just a random point by a different, but very direct technique. We conclude this paper by some short remarks on computational methods in Section 6.

\section{Basic notions and framework}
Equation \eqref{e1.0} has recently
been extensively studied within the so-called variational approach to SPDE 
(cf.\ e.g.\ \cite{bdpr}, \cite{bdpr2}, \cite{bdpr3}, \cite{bdpr4}, \cite{dprrw}, \cite{kim2006}, \cite[Example 4.1.11]{pr}, \cite{jrw}, \cite{rw}, \cite{wang2007}, we also refer to \cite{a86}, \cite{v07} and the references therein as background literature for the deterministic case).
The underlying Gelfand triple is
\begin{equation}\label{eq-pp}
  V \subset H \subset V^{*},\tag{1.1}
\end{equation} 
where $V := L^{p+1}(\L)$, $p \ge 1$, $H := H^1_0(\Lambda)^*$, with $H^1_0(\Lambda)$ being the Sobolev space of order one on $\Lambda$ with Dirichlet boundary conditions. We emphasize that the dualization in \eqref{eq-pp} is with respect to $H$, i.e. precisely
  \[ V \subset H \equiv H^* (= H^1_0(\Lambda)) \subset V^*, \]
where the identification of $H$ and $H^*$ is given by the Riesz isomorphism,
$\norm u^2_{H^1_0} := \int_\Lambda |\nabla u|^2_{\R^d}d\xi$, $u \in
  H^1_0(\Lambda)$, and $\norm\cdot _H$ is its dual norm.
Here $|{\cdot}|_{\R^d}$ denotes Euclidian norm on $\R^d$ and below $\bracket{\cdot,\cdot}_{\R^d}$ shall denote the corresponding inner product. By $\norm{\cdot}_p$ we will denote the $L^p$-norm. Further, for $r \in \N, p \ge 1$ let $H_0^{r,p}(\L)$ denote the usual Sobolev space of order $r$ in $L^p(\L)$ with Dirichlet boundary conditions.

\vskip10pt
We take $Q$ and the Wiener process $W_t$ of the following special type. $W= (\beta^{(1)}, \ldots , \beta^{(m)})$ is a Brownian motion on $\R^m$ defined on the canonical Wiener space $(\O, \mcF, (\mcF_t ), P)$, i.e. $\Omega := C(\R_+, \R^m)$, $W_t(\omega):= \omega (t)$, and $(\F_t)$ is the corresponding natural filtration. As usual we can extend $W_t $ (and $\mcF_t$) for all $t\in \R$ (cf. e.g. \cite[p. 99]{pr}). $Q: \R^m \to H$ is defined by 
   \[Qx = \sum_{j=1}^m  x _ j \vp _j , \quad x = (x_1, \ldots ,x_m )\in \R^m,\]
for fixed $\vp_1 , \ldots , \vp _m \in \C_0^1 (\L) \;(\subset L^2 (\L )\subset H)$. Here $\C_0^1(\L)$ denotes the set of all continuously differentiable functions with compact support in $\L$. 

\vskip10pt
The existence and uniqueness of solutions for \eqref{e1.0} under monotonicity and coercivity conditions on $\Phi$ is well-known even under much more general conditions than will be used here (see \cite{jrw}, \cite{bdpr2}). We will always assume the continuous function $\Phi: \mathbb{R} \rightarrow \mathbb{R}$ to satisfy the following conditions:
\begin{enumerate}
 \item[(A1)] Weak monotonicity: For all $t,s \in \mathbb{R}$
   \begin{equation*}\label{phi_conditions_1_1}
      (\Phi(t)-\Phi(s))(t-s) \ge 0.
   \end{equation*}
  \item[(A2)]\label{phi_conditions_1_2} Coercivity: There are $p \in [1,\infty), a \in (0,\infty), c \in [0,\infty)$ such that for all $s \in \mathbb{R}$
    \begin{equation*}
      \Phi(s)s \ge a|s|^{p+1} - c.
    \end{equation*}
  \item[(A3)] Polynomial boundedness: There are $c_1, c_2 \in [0,\infty)$ such that for all $s \in \mathbb{R}$
    \begin{equation*}\label{phi_conditions_1_3}
          |\Phi(s)| \le c_1 |s|^p + c_2,
    \end{equation*}
    where $p$ is as in (A2).
\end{enumerate}

Here and below the notion of solution to \eqref{e1.0} is the usual one (cf. \cite[Definition 4.1]{pr}). We recall that in particular 
\begin{equation}
\E \int_0^T \|X_t\|^{p+1}_{p+1} \; dt < \infty \text{ for all } T > 0,
\end{equation}
with $p$ as in (A2), (A3).

In order to obtain the existence of a random attractor we need slightly more restrictive dissipativity and coercivity conditions on $\Phi$. We 
will prove existence under two sets of assumptions. In the first case we need to assume stronger regularity of the noise, i.e. 
$QW_t \in C_0^2(\L)$, while in the second we allow $QW_t \in C_0^1(\L)$, but require stronger assumptions on the non-linearity $\Phi$.

\begin{hypothesis}\label{phi_conditions_2}
Assume $\vp_j \in C_0^2(\L)$, $1 \le j \le m$, thus $QW_t \in C_0^2(\L)$. Let further $\zeta: \mathbb{R} \rightarrow \mathbb{R}$, $\zeta(0)=0$ be a function such that
\begin{enumerate}
 \item[(A1)'] $\zeta$-Weak monotonicity: For all $t,s \in \mathbb{R}$
       \begin{equation*}\label{phi_conditions_2_2}
         (\Phi(t)-\Phi(s))(t-s) \ge (\zeta(t) - \zeta(s))^2.
       \end{equation*}
 \item[(A2)'] $\zeta$-Coercivity: For $p, a, c$ as in (A2) and for all $s \in \mathbb{R}$
      \begin{equation*}\label{phi_conditions_2_1}
          \zeta(s)^2 \ge a|s|^{p+1} - c.
       \end{equation*}
\end{enumerate}
\end{hypothesis}

\begin{remark}\label{rmk:A1'->A2'}
Note that we do not assume $\zeta$ (hence $\Phi$) to be stricly monotone. Furthermore, we note that the first inequality in (A2)' follows from (A1)' since 
$\Phi(0)=0=\zeta (0)$.
\end{remark}

\begin{remark}\label{rmk:phi_diff}
In case of a continuously differentiable nonlinearity $\Phi$, (more precisely, it suffices to assume that $\Phi \in H^{1,1}_{loc}(\mathbb{R})$) it is easy to find a candidate for $\zeta$. Namely, we simply define 
\begin{equation}\label{0.1}
\zeta(s) := \int_0^s \sqrt{\Phi'(r)}dr, \; s \in \R.
\end{equation}
Then by H\"older's inequality (A1)' holds. Therefore, to ensure that also (A2)' holds we only need to assume
that for some $a \in (0,\infty), c \in [0,\infty), p \in [1,\infty)$
\begin{equation}\label{1.3}
\left( \int_0^s \sqrt{\Phi'(r)} dr \right)^2 \ge a|s|^{p + 1} - c \quad \forall s \in \R.
\end{equation}
Conversely, this produces a lot of examples for $\Phi$ satisfying (A1)',(A2)',(A3). Simply, take $\zeta : \R \to \R$ continuously differentiable and non-decreasing with
$\zeta (0)=0$ and such that for some $a \in (0,\infty); c,c_1,c_2 \in [0,\infty), p \in [1,\infty),$ \[ \zeta^2(s) \ge a |s|^{p+1}-c, \quad
\zeta'(s) \le c_1 |s|^{\frac{p-1}{2}}+c_2 \quad \forall s \in \R.\] Then define \[\Phi(s):= \int^s_0 (\zeta'(r))^2 dr, \; s \in \R.\] In particular,
$\Phi(s):=s|s|^{p-1}$ arises this way (cf. also Section 5 below). In this case we have $\zeta(s) = \left( \frac{2\sqrt{p}}{p+1} \right) s|s|^{\frac{p-1}{2}}$.
\end{remark}

\begin{hypothesis}\label{phi_conditions_3} Let $\vp_j \in C_0^1(\L), 1 \le j \le m$. Assume further that $\Phi \in C^1(\R)$, satisfying \eqref{1.3}
such that
\begin{equation}\label{1.4}
 \Phi'(r) > 0 \text{  for almost all } r \in \mathbb{R},
\end{equation}
and that for some $\ti{c}_1 \in [0,\infty)$ 
\begin{equation}\label{1.5}
 \Phi'(s) \le \ti{c}_1 (|s|^{p-1} + 1) \quad \forall s \in \R,
\end{equation}
    where $p$ is as in \eqref{1.3}.
\end{hypothesis}

\begin{example}\label{exam:Phi}
  Let $\d > 0$ and
  $$ \Phi(r):= 
    \begin{cases}
        (r+\d)^3  & \text{, for } r \le -\d \\
        0         & \text{, for } |r| < \d \\
        (r-\d)^3  & \text{, for } r \ge \d.
    \end{cases} $$
  Then $\Phi \in H^{1,1}_{loc}(\R)$ and by Remark \ref{rmk:phi_diff} Hypothesis \ref{phi_conditions_2} holds with $p = 3$. Hypothesis \ref{phi_conditions_3}, however, does not hold.
\end{example}

In Remark \ref{rmk:phi_diff} we have already obtained
\begin{lemma}\label{lemma:hyp1.4->hyp1.1}
  Assume $\varphi_j \in C^2_0(\L), 1 \le j \le m$. Then Hypothesis \ref{phi_conditions_3} implies Hypothesis \ref{phi_conditions_2} with
    \[ \zeta(s) := \int_0^s \sqrt{\Phi'(r)}dr, \; s \in \R. \]
\end{lemma}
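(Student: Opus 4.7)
The plan is essentially to execute the observation already hinted at in Remark 1.3: namely that with $\zeta(s):=\int_0^s\sqrt{\Phi'(r)}\,dr$, condition (A1)' is a direct consequence of the Cauchy–Schwarz inequality, while (A2)' is literally identical to \eqref{1.3}.

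First I would note that $\zeta(0)=0$ is immediate. Since by \eqref{1.4} we have $\Phi'(r)\ge 0$ for a.e.\ $r$, the function $\Phi$ is non-decreasing and $\zeta$ is well-defined (the integrand is real and locally integrable thanks to the upper bound \eqref{1.5}). Then (A2)' is nothing but the assumption \eqref{1.3} rewritten in terms of $\zeta$:
\[
\zeta(s)^2=\Big(\int_0^s\sqrt{\Phi'(r)}\,dr\Big)^2\ge a|s|^{p+1}-c,\qquad s\in\R.
\]

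Next I would verify (A1)'. Fix $t,s\in\R$ and assume without loss of generality that $s<t$ (the cases $s=t$ and $s>t$ being trivial or symmetric). Since $\Phi\in C^1(\R)$ with $\Phi'\ge 0$, the fundamental theorem of calculus together with the Cauchy–Schwarz inequality yields
\[
\big(\zeta(t)-\zeta(s)\big)^2=\Big(\int_s^t\sqrt{\Phi'(r)}\cdot 1\,dr\Big)^2\le (t-s)\int_s^t\Phi'(r)\,dr=(t-s)\bigl(\Phi(t)-\Phi(s)\bigr),
\]
which is exactly (A1)', since $t-s>0$ and $\Phi(t)-\Phi(s)\ge 0$. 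The remaining regularity requirement of Hypothesis \ref{phi_conditions_2}, namely $\varphi_j\in C_0^2(\Lambda)$, is explicitly assumed in the statement of the lemma, so nothing further is needed.

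No genuine obstacle arises: the proof is essentially a one-line application of Cauchy–Schwarz once the candidate $\zeta$ is produced, and the growth condition \eqref{1.5} is used only to ensure that $\sqrt{\Phi'}$ is locally integrable so that $\zeta$ is a well-defined $C^1$ function. The only point requiring slight care is to make (A1)' symmetric in $t$ and $s$, which is handled by the case distinction above.
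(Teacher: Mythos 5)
Your proof is correct and follows essentially the same route as the paper, which simply invokes Remark \ref{rmk:phi_diff}: (A1)' via the Cauchy--Schwarz (H\"older) inequality applied to $\int_s^t\sqrt{\Phi'(r)}\cdot 1\,dr$, and (A2)' as a restatement of \eqref{1.3}. Your additional remarks on $\zeta(0)=0$ and the well-definedness of $\zeta$ are harmless elaborations of what the paper leaves implicit.
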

% \begin{proof}
%   For $(A1)'$ using H\"older's inequality we note
%   \begin{align*}
%     (\Phi(t)-\Phi(s))(t-s) 
%     &= \left(\int_s^t \Phi'(r)dr \right)(t-s) \ge \left(\int_s^t \sqrt{\Phi'(r)} dr\right)^2 = (\zeta(t) - \zeta(s))^2.
%   \end{align*}
%   By Remark \ref{rmk:A1'->A2'} and \eqref{1.3} also $(A2)'$ follows.
% \end{proof}

% \begin{remark}
% Assume $\varphi_j \in C^2_0(\L), 1 \le j \le m$. Then Remark \eqref{rmk:phi_diff} implies that Hypothesis \ref{phi_conditions_3} is stronger than Hypothesis \ref{phi_conditions_2}, i.e. it implies that
% (A1)' and (A2)' hold.
% \end{remark}

\begin{remark}\label{1.1}
\begin{enumerate}
\item[(i.)]\label{1.1.i} There is a set $\O_0 \subset \O$ of full measure such that for each $p\geq 1$, $\o\in \O_0$, $\norm{ QW_t (\o)}^p_{p}$\;, $\norm{\nabla ( QW _t (\o))}_{p}^p$ and (if $QW_t \in C_0^2$) $\norm{\D ( QW _t (\o))}_{p}^p$ are of at most polynomial growth as ${|t|} \to \infty$.
\item[(ii.)] We shall largely follow the strategy of \cite{cf1}, in which similar assumptions on $Q$, hence on the noise $QW$ are made. The condition that each $\vp_i$ should be in $C_0^1(\L)$ ($C_0^2(\L)$ resp.) can be easily relaxed to $QW_t \in H_0^{1,p+1}(\L)$ ($QW_t \in H_0^{2,p+1}(\L)$ resp.) and is imposed here for the sake of simplicity only.
\end{enumerate}
\end{remark}

In the following let $\l_1$ denote the constant appearing in Poincar\'e's inequality, i.e. for all $f \in H_0^{1,2}(\L)$ 
\[\l_1 \int _\L f (x)^2 dx \leq \int _\L | \nabla f (x)| ^2 dx.\]
For $t\geq s$ and $x\in H$, $X(t,s,x)$ will denote the value at time $t$ of the solution $X_t$ of \eqref{e1.0} such that $X_s = x$. 

\vskip10pt
We now recall the notions of a random dynamical system and a random attractor. For more details confer \cite{a1,cf2,cf1}. Let $((\O,\mcF,\P),(\theta_t)_{t \in \R})$ be a metric dynamical system over a complete probability space $(\O,\mcF,\P)$, i.e. $(t,\o) \mapsto \theta_t(\o)$ is $\mcB(\R) \otimes \mcF / \mcF$--measurable, $\theta_0 =$ id, $\theta_{t+s} = \theta_t \circ \theta_s$ and $\theta_t$ is $\P$-preserving, for all $s,t \in \R$.

\begin{definition}
  Let $(H,d)$ be a complete separable metric space. A random dynamical system (RDS) over $\theta_t$ is a measurable map
  \begin{align*}
    \vp: \R_+ \times H \times \O &\to H \\
    (t,x,\o)                     &\mapsto \vp(t,\o)x
  \end{align*}
  such that $\vp(0,\o) =$ id and $\vp$ satisfies the cocycle property, i.e. 
  \[ \vp(t+s,\o) = \vp(t,\theta_s \o) \circ \vp(s,\o), \]
  for all $t,s \in \R_+$ and all $\o \in \O$. $\vp$ is said to be a continuous RDS if $\P$-a.s. $x \mapsto \vp(t,\o)x$ is continuous for all $t \in \R_+$.
\end{definition}

With the notion of an RDS at our disposal we can now recall the stochastic generalization of notions of absorption, attraction and $\O$-limit sets.

\begin{definition}\label{def:rds_basics}
Let $(H,d)$ be as in Definition 1.7
\begin{enumerate}[(i.)]
  \item A set-valued map $K: \O \to 2^H$ is called measurable if for all $x \in H$ the map $\o \mapsto d(x,K(\o))$ is measurable, where for nonempty sets $A,B \in 2^H$ we set $d(A,B)=\sup\limits_{x \in A} \inf\limits_{y \in B} d(x,y)$ and $d(x,B) = d(\{x\},B)$. A measurable set-valued map is also called a random set.
  \item Let $A$, $B$ be random sets. $A$ is said to absorb $B$ if $\P$-a.s. there exists an absorption time $t_B(\o) \geq 0$ such that for all $t \ge t_B(\o)$
        \[ \vp(t,\theta_{-t}\o)B(\theta_{-t}\o) \subseteq A(\o).\]
        $A$ is said to attract $B$ if 
        \[ d(\vp(t,\theta_{-t}\o)B(\theta_{-t}\o),A(\o)) \xrightarrow[t \to \infty]{} 0,\ \P\text{-a.s. }.\]
  \item For a random set $A$ we define the $\O$-limit set to be
        \[ \O_A(\o) := \bigcap_{T \ge 0} \overline{ \bigcup_{t \ge T} \vp(t,\theta_{-t}\o)A(\theta_{-t}\o)}. \]
  \end{enumerate}
\end{definition}

\begin{definition}
  A random attractor for an RDS $\vp$ is a compact random set $A$ satisfying $\P$-a.s.
  \begin{enumerate}
   \item $A$ is invariant, i.e. $\vp(t,\o)A(\o)= A(\theta_t \o)$ for all $t > 0$.
   \item $A$ attracts all deterministic bounded sets $B \subseteq H$.
  \end{enumerate}
\end{definition}

The following proposition yields a sufficient criterion for the existence of a random attractor of an RDS $\vp$. We also refer to \cite{KS04} for a different approach based on ergodicity, which however seems less suitable for our case.
\begin{proposition}[cf. \cite{cf1}, Theorem 3.11]\label{prop:suff_criterion}
  Let $\vp$ be an RDS and assume the existence of a compact random set $K$ absorbing every deterministic bounded set $B \subseteq H$. Then there exists a random attractor $A$, given by
  \[ A(\o) = \overline{ \bigcup_{B \subseteq H,\ B \text{ bounded}} \O_B(\o)}.\]
\end{proposition}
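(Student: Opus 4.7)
The plan is to follow the classical Crauel--Flandoli construction, essentially verifying the four requirements on $A(\o)$: well-definedness/compactness, attraction, invariance, and measurability, exploiting that the compact absorbing set $K$ sits above every orbit of a bounded set after some random time.

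First I would observe that for any bounded deterministic $B \subseteq H$, the definition of absorption gives some $t_B(\o) \geq 0$ with $\vp(t,\theta_{-t}\o)B \subseteq K(\o)$ for $t \geq t_B(\o)$. Since $K(\o)$ is compact (hence closed), it follows that $\Omega_B(\o) \subseteq K(\o)$ for every bounded $B$. Now the family of bounded sets can be replaced by the countable family of closed balls $B_n := \{x \in H : d(x,x_0) \leq n\}$ for a fixed $x_0 \in H$ (any bounded set is contained in some $B_n$, and then $\Omega_B \subseteq \Omega_{B_n}$). Hence $A(\o) = \overline{\bigcup_{n \in \N} \Omega_{B_n}(\o)}$ is a closed subset of the compact set $K(\o)$, so $A(\o)$ is compact.

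Next, I would prove attraction. Fix bounded $B \subseteq H$ and suppose, for contradiction, that $d(\vp(t_k,\theta_{-t_k}\o)B,A(\o)) \geq \d > 0$ for some sequence $t_k \to \infty$. Pick $x_k \in B$ with $d(\vp(t_k,\theta_{-t_k}\o)x_k, A(\o)) \geq \d/2$. For $k$ large, $t_k \geq t_B(\o)$, so $y_k := \vp(t_k,\theta_{-t_k}\o)x_k \in K(\o)$. By compactness of $K(\o)$ a subsequence $y_{k_j} \to y$. But by the definition of $\Omega_B$, $y \in \Omega_B(\o) \subseteq A(\o)$, contradicting $d(y_{k_j},A(\o)) \geq \d/2$. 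Hence $A$ attracts every bounded $B$. Invariance is then verified as follows: for $t > 0$, using continuity of $x \mapsto \vp(t,\o)x$ and the cocycle property $\vp(t+s,\theta_{-s}\o) = \vp(t,\o)\circ \vp(s,\theta_{-s}\o)$ (applied at the random times $s$), together with the interchange with closures that compactness of $K(\o)$ permits, one obtains $\vp(t,\o)\Omega_B(\o) = \Omega_B(\theta_t\o)$, which after taking unions and closures yields $\vp(t,\o)A(\o) = A(\theta_t\o)$. The nontrivial inclusion ``$\supseteq$'' uses compactness: any $y \in \Omega_B(\theta_t \o)$ is a limit $y = \lim \vp(s_k,\theta_{-s_k}\theta_t\o)x_k$, and writing $\vp(s_k,\theta_{-s_k}\theta_t \o) = \vp(t,\o)\circ\vp(s_k-t,\theta_{-(s_k-t)}\o)$ for $s_k > t$, the inner orbit lies eventually in the compact $K(\o)$, and passing to a convergent subsequence gives a preimage in $\Omega_B(\o) \subseteq A(\o)$.

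The step I expect to be the main obstacle is the \emph{measurability} of $\o \mapsto A(\o)$. Here I would use the fact that the state space $H$ is separable, fix a countable dense set $\{z_i\}_{i \in \N}$, and express $d(z_i,A(\o)) = \inf_{n \in \N} d(z_i,\Omega_{B_n}(\o))$. It then suffices to show that $\o \mapsto d(z_i,\Omega_{B_n}(\o))$ is measurable for each $i,n$, for which I would rewrite
\[
d(z_i,\Omega_{B_n}(\o)) = \sup_{T \in \N} \inf_{t \geq T,\, x \in B_n}\, d\bigl(z_i,\vp(t,\theta_{-t}\o)x\bigr),
\]
replace the inner infimum by a countable one using continuity of $\vp(t,\theta_{-t}\o)(\cdot)$ on the separable set $B_n$ and joint measurability of $(t,\o) \mapsto \vp(t,\theta_{-t}\o)x$, and conclude that the resulting countable sup/inf of measurable functions is measurable. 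Combined with the three previous steps, this proves all items of Definition 1.9 and the explicit formula, which is exactly the assertion of the proposition.
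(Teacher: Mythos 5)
The paper gives no proof of this proposition at all --- it is imported verbatim from Crauel--Flandoli \cite{cf1}, Theorem 3.11 --- so there is nothing internal to compare against; your argument is a correct reconstruction of the standard proof from that reference (reduction to the countable family of balls $B_n$, the inclusion $\O_{B}(\o)\subseteq K(\o)$ from closedness of the compact absorbing set, attraction by a compactness/contradiction argument, invariance via the cocycle property and continuity on the compact $K(\o)$, and measurability via the countable $\sup$--$\inf$ representation of $d(z_i,\O_{B_n}(\o))$). The only details left tacit are the nonemptiness of $\O_B(\o)$ (a nested intersection of nonempty closed subsets of the compact $K(\o)$) and the passage from measurability of $\o\mapsto d(z_i,A(\o))$ on the countable dense set to all of $H$, which follows since $x\mapsto d(x,A(\o))$ is $1$-Lipschitz.
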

From now on we take $H:=H_0^1(\L)^*$ with metric determined by its norm $ \| \cdot \|_H$.
Since we aim to apply Proposition \ref{prop:suff_criterion} to prove the existence of a random attractor for \eqref{e1.0}, we first need to define the RDS  associated to \eqref{e1.0}. We take $(\O,\mcF,\P)$ to be the canonical two-sided Wiener space, i.e. $\O = C_0(\R,\R^m)$ and $\theta_t$ to be the Wiener shift given by $\theta_t \o := \o(t+\cdot)-\o(t)$. As in \cite[pp. 375--377]{cf1} we consider $Y(t,s,x) := X(t,s,x) - QW_t $. Then we have for all $s \in \R, x \in H, \P$-a.s.:
  \[ Y(t,s,x) = x-QW_s + \int_s^t \D \Phi(Y(r,s,x)+QW_r)dr,\ \forall t \ge s. \]
We can rewrite this as an $\o$-wise equation:
\begin{equation}\label{eqn:o-wise}
  Z_t(\o) = x-QW_s(\o) + \int_s^t A_\o(r,Z_r(\o)) dr,\ \forall t \ge s,
\end{equation}
where $A_\o(r,v) := \D \Phi(v + QW_r(\o))$. Since for each fixed $\o \in \O$, $A_\o: V \to V^*$ is hemicontinuous, monotone, coercive and bounded we can apply \cite[Theorem 4.2.4]{pr} to obtain the unique existence of a solution 
\begin{equation}\label{eqn:lp-bdd}
  Z(t,s,x,\o) \in L^{p+1}_{loc}([s,\infty);V) \cap C([s,\infty),H)
\end{equation}
 to \eqref{eqn:o-wise} for all $x \in H$, $\o \in \O$, $s \in \R$ and its continuous dependence on the initial condition $x$. We now define in analogy to \cite{cf2} 
\begin{align}\label{eqn:def_rds}
  S(t,s,\o)x &:= Z(t,s,x,\o) + QW_t(\o),\ s,t \in \R;\ s \le t \nonumber\\
  \vp(t,\o)x &:= S(t,0,\o)x = Z(t,0,x,\o) + QW_t(\o),\ t \ge 0.
\end{align}
By uniqueness for \eqref{e1.0} $S(t,s,\o)x$ is a version of $X(t,s,x)(\o)$, for each $x \in H$, $s \in \R$. For fixed $s,\o,x$ we at times abbreviate $S(t,s,\o)x$ by $S_t$ and $Z(t,s,x,\o)$ by $Z_t$. By the pathwise uniqueness of the solution to equation \eqref{eqn:o-wise} we have for all $\o \in \O, \; r,s,t \in \R, \; s \leq r \leq t,$
\begin{align}\label{1.6'}
  S(t,s,\o) &= S(t,r,\o)S(r,s,\o) \tag{1.8'}\\
  S(t,s,\o) &= S(t-s,0,\theta_s\o). \label{1.6''} \tag{1.8''}
\end{align}
The joint measurability of $\vp: \R_+ \times H \times \O \to H$ follows from the construction of the solutions $Z_t$ (for details we refer to \cite{GLR10}). Hence $\vp$ defines an RDS. We can thus apply Proposition \ref{prop:suff_criterion} to prove the existence of a random attractor for $\vp$. For this we need to prove the existence of a compact set $K(\o)$, which absorbs every bounded deterministic set in $H$, $\P$-almost surely. This set will be chosen as $K(\o) := \overline{ B_{L^2} (0, \kappa (\o ) )}^H$, where $B_{L^2} (0, \kappa)$ denotes the ball with center $0$ and radius $\kappa$ in $L^2$. Note that since $\vp(t,\theta_{-t}\o) = S(t,0,\theta_{-t}\o) = S(0,-t,\o)$, this amounts to proving pathwise bounds on $S(0,-t,\o)x$ in the $L^2$-norm, where we use the compactness of the embedding $L^2(\L) \hookrightarrow H$. In order to get such estimates we consider norms $\norm{\cdot}_{H_a}$ on $H$ such that $\norm{\cdot}_{H_a}\uparrow \norm \cdot _{L^2}$, as $a \downarrow 0$. These are defined as the dual norms (via the Riesz isomorphism) of the norms
  \[ H^1_0(\Lambda) \owns u \mapsto \left(a \int_\Lambda |\nabla u|^2 d\xi + \int u^2  d\xi\right)^{1/2}.\]
Then for $s \le t$ we have (see e.g. \cite[Theorem 2.6 and Lemma 2.7 (i),(ii)]{rw}) for $a:=\frac{1}{n}$
\begin{equation}\label{eqn:approx-ito}\tag{1.9}
 \norm{ Z_t}^2_{H_\frac{1}{n}} \, = \, \norm {Z_s }^2_{H_\frac{1}{n}} + 2 \int^t_s \langle \Phi(S_r), n(1-\frac{1}{n} \Delta)^{-1} Z_r -nZ_r \rangle dr,
\end{equation}
where for $f,g : \Lambda \to \R$ measurable we set
 \[\langle f,g \rangle := \int_\L\, f\, g\, d\xi\]
if $|fg| \in L^1(\L)$. We shall use \eqref{eqn:approx-ito} in a crucial way several times below.

%---------------------------------------------------------------------------------
%--------------------------   H estimate    ------------------------------------
%---------------------------------------------------------------------------------

\section{Estimates for $\norm{S_t}_{H}$ and bounded absorption}
In this section we construct bounded random sets that absorb trajectories with respect to the weak norm $\|\cdot\|_H$. For these estimates only the standard assumptions (A1)-(A3) are needed.

\begin{theorem}\label{thm:H-bound}
Let $\b \in (0,\infty)$, with $\b \leq \frac{a}{2}$, if $p=1$. Then there exists a function $p_1^{(\b)}: \R\times \O\to \R_+$ such that $p_1^{(\b)}(t,\cdot)$ is $\F$--measurable for each $t \in \R$, $p_1^{(\b)}(\cdot,\o$) is continuous and of at most polynomial growth for $|t| \to \infty, \o \in \O$ and for all $x \in H$, $\o \in \O_0$ (cf. Remark \ref{1.1} (i)), $s \in \R$:
\begin{align}\label{eqn:H-bound}\tag{2.1}
  \norm{Z(t_2,s,x,\o)}_H^2 \leq \norm{Z(t_1,s,x,\o)}_{H}^2 -\b \int^{t_2}_{t_1} \norm{Z(r,s,x,\o)}^2_2 dr + \int^{t_2}_{t_1} p_1^{(\b)} (r,\o) dr, \\ \text{for all } s \le t_1 \le t_2.\nonumber
\end{align}
\end{theorem}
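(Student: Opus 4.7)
My plan is to establish an energy identity for $\|Z_r\|_H^2$ and then estimate the resulting integrand using the structural hypotheses \textbf{(A1)}--\textbf{(A3)}, together with H\"older's and Young's inequalities. Starting from the approximate It\^o-type formula \eqref{eqn:approx-ito} at level $a=1/n$ and passing to a suitable limit --- or, equivalently, invoking the deterministic variational chain rule for the solution $Z$, which lies in $L^{p+1}_{loc}([s,\infty);V) \cap C([s,\infty);H)$ and satisfies $\partial_r Z = \Delta\Phi(S) \in V^*$ --- I obtain the identity
$$ \|Z_{t_2}\|_H^2 - \|Z_{t_1}\|_H^2 \;=\; -2\int_{t_1}^{t_2}\!\int_\Lambda \Phi(S_r)\,Z_r\,d\xi\,dr, $$
which encodes the standard Gelfand-triple pairing ${}_{V^*}\langle \Delta\Phi(u),v\rangle_V = -\int_\Lambda \Phi(u)\,v\,d\xi$ for the porous medium operator on $V \subset H \subset V^*$.

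Writing $Z_r = S_r - QW_r$, the integrand splits as $-\int_\Lambda \Phi(S_r) S_r \,d\xi + \int_\Lambda \Phi(S_r) QW_r \,d\xi$. The first summand is controlled by coercivity \textbf{(A2)}: $-\int_\Lambda \Phi(S_r)\,S_r \,d\xi \le -a\|S_r\|_{p+1}^{p+1} + c|\Lambda|$. For the mixed term, polynomial boundedness \textbf{(A3)} combined with H\"older and Young (with exponents $\tfrac{p+1}{p}$ and $p+1$) yields, for any $\eta > 0$,
$$ \int_\Lambda \Phi(S_r)\,QW_r \,d\xi \;\le\; \eta\|S_r\|_{p+1}^{p+1} + C_\eta\bigl(\|QW_r\|_{p+1}^{p+1} + 1\bigr). $$
The passage from $\|S_r\|_{p+1}^{p+1}$ to $\|Z_r\|_2^2$ then splits into two cases. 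For $p > 1$, the elementary inequality $|s|^{p+1} \ge K|s|^2 - K^{(p+1)/(p-1)}$ (valid for any $K > 0$), combined with the triangle-type estimate $\|S_r\|_2^2 \ge \tfrac12 \|Z_r\|_2^2 - \|QW_r\|_2^2$, yields any desired prefactor $\beta > 0$ by taking $\eta$ small and $K$ large. For $p = 1$, the coercivity term already controls $\|S_r\|_2^2$ directly; here I use the sharp Young splitting $c_1\|S_r\|_2\|QW_r\|_2 \le \tfrac{a}{2}\|S_r\|_2^2 + \tfrac{c_1^2}{2a}\|QW_r\|_2^2$, so that exactly half of the coercivity is preserved, and the same triangle-type estimate produces the factor $-\tfrac{a}{2}\|Z_r\|_2^2$, which explains the restriction $\beta \le a/2$.

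All remaining $\omega$-dependent contributions are linear combinations of $\|QW_r(\omega)\|_{p+1}^{p+1}$, $\|QW_r(\omega)\|_2^2$, and pure constants, which I collect into $p_1^{(\beta)}(r,\omega) := C_1 + C_2\|QW_r(\omega)\|_{p+1}^{p+1} + C_3\|QW_r(\omega)\|_2^2$. Measurability of $p_1^{(\beta)}(r,\cdot)$ follows from that of $\omega \mapsto QW_r(\omega)$; continuity of $p_1^{(\beta)}(\cdot,\omega)$ follows from continuity of $r \mapsto QW_r(\omega)$; and the at-most polynomial growth as $|r|\to\infty$ is precisely the content of Remark \ref{1.1}(i). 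The chief technical obstacle is the rigorous derivation of the opening energy identity, since the variational chain rule in the non-Hilbertian setting $V = L^{p+1}$ requires verifying that $\Delta\Phi(S_r)$ lies in $V^*$ with the stated $L^2$-pairing representation; a secondary point of care is the sharp calibration of Young's inequality in the $p=1$ case needed to attain the threshold $\beta \le a/2$.
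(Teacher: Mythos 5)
Your proposal is correct and follows essentially the same route as the paper: the energy identity for $\|Z_r\|_H^2$ via the pairing $\langle Z_r,\Phi(S_r)\rangle$, the split $Z_r=S_r-QW_r$, coercivity (A2) on the diagonal term, (A3) plus Young on the mixed term with a small-$\epsilon$ absorption into $a\|S_r\|_{p+1}^{p+1}$, the elementary inequality $a|y|^{p+1}\ge 2\b|y|^2-C_\b$ (which forces $\b\le a/2$ when $p=1$), and the same collection of the $\o$-dependent remainders into $p_1^{(\b)}$. The only cosmetic difference is that the paper simply quotes the opening identity from the variational framework rather than deriving it from \eqref{eqn:approx-ito}.
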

\begin{proof}
We fix $x,\o,s$ and set $Z_r:=Z(r,s,x,\o), \; S_r := S(r,s,\o)x$ for $r \geq s$. All constants appearing in the proof below are, however, independent of $x,\o$ and $s$!\\
Since for $s \le t_1 \le t_2$
    \[ \norm{Z_{t_2} }^2 _H = \norm{Z_{t_1}}^2_H - 2 \int^{t_2}_{t_1} \langle Z_r , \Phi(S_r) \rangle dr, \]
  we have for $dr$--a.e. $r \in [s,\infty)$ by (A2)
  \begin{align*}
    \frac d{dr} \norm{Z_r} ^2 _H &= - 2 \langle Z_r , \Phi(S_r) \rangle \\
    & =-  2 \langle S_r - QW_r , \Phi(S_r) \rangle \\
    & =-  2 \langle S_r , \Phi(S_r) \rangle + 2 \langle QW_r , \Phi(S_r) \rangle \\
    & \le -  2a \int _\L |S_r|^{p +1} d\xi  + 2 \int_\L \left( |QW_r \Phi(S_r)| + c \right) d\xi .
  \end{align*}
  By Young's inequality, for arbitrary $\epsilon >0$ and some $C_\epsilon (=C_\epsilon (p)), C_1, C_2 \in \R$ we have by (A3)
  \begin{align*}
    \int_\L |QW_r \Phi(S_r)| d\xi & 
      \le \int_\L \left( C_\epsilon |QW_r|^{p+1} + \epsilon |\Phi(S_r)|^{\frac{p+1}{p}} \right) d\xi \\
      &\le \epsilon C_1 \|S_r\|_{p+1}^{p+1} + C_\epsilon \|QW_r\|_{p+1}^{p+1} + \epsilon C_2 |\L|,
  \end{align*}
  where $|\L|:= \int_\L \; d\xi .$
  Thus by choosing $\epsilon = \frac{a}{C_1}$ we obtain for $dr$--a.e. $r \in [t_1,t_2]$
  \begin{align*}
      \frac d{dr} \norm{Z_r} _H^2 
	&\le - a\|S_r\|_{p+1}^{p+1} + C_\epsilon \|QW_r\|_{p+1}^{p+1}+2|\L|(c+C_3).
  \end{align*}
where $C_3 := \frac{aC_2}{C_1}$.\\
  Now, if $p > 1$, then for each $\b > 0$ we can find a $C_{\b}$ such that for all $y \in \R$ one has $a|y|^{p+1} \ge 2\b|y|^2 - C_{\b}$. If $p=1$, then we have the same, provided $\b \in (0, \frac{a}{2}]$. We obtain
  \[ a\|S_r\|_{p+1}^{p+1} \ge 2\b \|S_r\|_2^2 - |\L|C_{\b} = 2\b\|Z_r + QW_r\|_2^2 - |\L|C_{\b} \ge \b \|Z_r\|_2^2 - 2\b \|QW_r\|_2^2 - |\L|C_{\b}. \]
  Hence for
  $$ p_1^{(\b)}(r,\o):= 
    \begin{cases}
        2\b \|QW_r\|_2^2 + |\L|C_{\b}  + C_\epsilon \|QW_r\|_{p+1}^{p+1}+2|\L|(c+C_3) & \text{, if } \o \in \O_0 \\
        0 & \text{, otherwise }
    \end{cases} $$
  we obtain for $dr$-a.e. $r \in [t_1,t_2]$
  \begin{align*}
      \frac d{dr} \norm{Z_r} _H^2 \leq - \b \|Z_r\|_2^2 + p_1^{(\b)} (r,\o).
  \end{align*}
and the assertion follows.
\end{proof}

\begin{corollary}\label{cor:H-bound}
  Let $\b \in (0,\infty)$, with $\b \leq \frac{a}{2}$ if $p=1$ and let $t \in \R$. Then there exists an $\F$--measurable function $q_1^{(\b,t)}: \O\to \R,$
  such that for all $x \in H$, $\omega \in \Omega_0$ and $s \le t$
  \begin{equation}\label{star}\tag{2.2}
    \norm{Z(t,s,x,\o)}_H^2 \leq q_1^{(\b,t)}(\o) + e^{-\frac{\b}{e^2} (t-s)} \norm{Z(s,s,x,\o)} _H^2	.
  \end{equation}
\end{corollary}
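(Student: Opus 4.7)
The plan is to upgrade the estimate from Theorem \ref{thm:H-bound} into a closed Gronwall-type inequality for $\norm{Z_r}_H^2$ alone, and then integrate. The two ingredients needed beyond Theorem \ref{thm:H-bound} are (i) the continuous embedding $L^2(\L) \hookrightarrow H$, which lets one bound the $L^2$ dissipation from below by an $H$ dissipation, and (ii) the polynomial-in-$r$ control on the inhomogeneity $p_1^{(\b)}(r,\o)$ furnished by Remark \ref{1.1}(i.), which ensures that an improper integral in $r$ is well defined.

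First I would read \eqref{eqn:H-bound} as saying that $r\mapsto\norm{Z_r}_H^2$ is absolutely continuous on $[s,\infty)$ with
$$\frac{d}{dr}\norm{Z_r}_H^2 \le -\b\norm{Z_r}_2^2 + p_1^{(\b)}(r,\o) \qquad dr\text{-a.e.}$$
Next, since every $u \in L^2(\L)$ lies in $H = H_0^1(\L)^*$ via the Riesz pairing, there is a constant $\kappa > 0$ such that $\norm{u}_H^2 \le \kappa^{-1}\norm{u}_2^2$ for all $u\in L^2(\L)$ (this is what accounts for the $e^2$-factor in \eqref{star}). Substituting $\norm{Z_r}_2^2 \ge \kappa \norm{Z_r}_H^2$ yields the closed inequality
$$\frac{d}{dr}\norm{Z_r}_H^2 \le -\b\kappa\,\norm{Z_r}_H^2 + p_1^{(\b)}(r,\o),$$
whereupon Gronwall's lemma gives, for $s \le t$,
$$\norm{Z_t}_H^2 \le e^{-\b\kappa(t-s)}\norm{Z_s}_H^2 + \int_s^t e^{-\b\kappa(t-r)} p_1^{(\b)}(r,\o)\,dr.$$

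Setting $q_1^{(\b,t)}(\o) := \int_{-\infty}^t e^{-\b\kappa(t-r)} p_1^{(\b)}(r,\o)\,dr$, the integral on $[s,t]$ is dominated by this improper integral, producing \eqref{star}. It then only remains to check that $q_1^{(\b,t)}$ is well-defined and $\F$-measurable. Finiteness on $\O_0$ is the one point that needs care: inspecting the definition of $p_1^{(\b)}$ in the proof of Theorem \ref{thm:H-bound}, its $\o$-dependence enters only through $\norm{QW_r(\o)}_2^2$ and $\norm{QW_r(\o)}_{p+1}^{p+1}$, both of which grow at most polynomially as $|r|\to\infty$ by Remark \ref{1.1}(i.); hence the exponential weight $e^{-\b\kappa(t-r)}$ dominates the integrand as $r\to -\infty$ and the improper integral converges. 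Measurability of $q_1^{(\b,t)}$ then follows from Fubini's theorem together with the joint measurability of $(r,\o) \mapsto p_1^{(\b)}(r,\o)$.

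There is no real obstacle: the substantive analytic work has been done in Theorem \ref{thm:H-bound}, and Corollary \ref{cor:H-bound} is essentially a Gronwall repackaging. The only non-mechanical bit of bookkeeping is the integrability at $-\infty$ of the exponentially weighted $p_1^{(\b)}(\cdot,\o)$, which, as noted, is guaranteed by the polynomial bounds afforded by Remark \ref{1.1}(i.).
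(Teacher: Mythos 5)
Your proof is correct and follows essentially the same route as the paper: differentiate the estimate of Theorem \ref{thm:H-bound}, use the continuous embedding $L^2(\L)\hookrightarrow H$ to close the inequality in the $H$-norm, apply Gronwall, and define $q_1^{(\b,t)}$ as the improper exponentially weighted integral of $p_1^{(\b)}$ over $(-\infty,t]$ (whose convergence on $\O_0$ follows, as you note, from Remark \ref{1.1}(i.)). Your observation that the ``$e^2$'' in \eqref{star} is really the square of the embedding constant matches the paper's proof, where it appears as $c^2$.
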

\begin{proof}
  Since the embedding $L^2 \hookrightarrow H$ is continuous, there is a constant $c> 0$ such that $\norm{v} _H\leq c\norm{v}_{2}$, for all $v \in L^2$. Hence by Theorem 2.1
    \[\frac d {dr} (\norm{Z_r}_H^2) \leq - \frac{\b}{c^2} \norm{Z_r} _H^2 + p_1^{(\b)}(r,\o) \ \ dr\text{--a.e. on } [s,t].\]
  Hence by Gronwall's Lemma the assertion follows with $q_1^{\b,t}(\o):=\int^t_{-\infty} e^{-\frac{\b}{c^2}(t-r)}p_1(r,\o)dr$.
\end{proof}

\begin{corollary}[Bounded absorption]\label{cor:bdd-absorption}
  Let $t \in \R$. Then there is an $\F$--measurable function $q_1^{(t)}:\O \to \R$ such that for each $\varrho >0$ there is an $s(\varrho) \le t$ such that for all $\o \in \O_0$, $x \in H$ with $\norm{x}_H \leq \varrho$
    \[Z(t,s,x,\o) \in \bar {B}_H(0,q_1^{(t)}(\o)), \quad \text{for all } s \leq s(\varrho)\]
  i.e. there exists a bounded random set absorbing $(Z_t)$ at time $t$.
\end{corollary}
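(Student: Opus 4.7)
The plan is to deduce Corollary 2.3 essentially as a direct consequence of Corollary 2.2 combined with the polynomial growth control on $QW$ from Remark \ref{1.1} (i). Fix $\beta$ as in Corollary 2.2 (e.g.\ $\beta = a/2$ if $p=1$, and any $\beta>0$ otherwise), and fix $t \in \R$. Then Corollary 2.2 yields
\[ \norm{Z(t,s,x,\o)}_H^2 \le q_1^{(\b,t)}(\o) + e^{-\frac{\b}{c^2}(t-s)} \norm{Z(s,s,x,\o)}_H^2 \qquad \forall\, s \le t,\ \o \in \O_0. \]
Since by definition $Z(s,s,x,\o) = x - QW_s(\o)$, the initial term is controlled by
\[ \norm{Z(s,s,x,\o)}_H^2 \le 2\norm{x}_H^2 + 2\norm{QW_s(\o)}_H^2 \le 2\varrho^2 + 2C\norm{QW_s(\o)}_2^2, \]
where $C$ is the embedding constant for $L^2(\L) \hookrightarrow H$.

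Next I would invoke Remark \ref{1.1} (i): for $\o \in \O_0$ the quantity $\norm{QW_s(\o)}_2^2$ is of at most polynomial growth as $|s|\to\infty$. Therefore, for any $\o \in \O_0$,
\[ e^{-\frac{\b}{c^2}(t-s)}\bigl(2\varrho^2 + 2C\norm{QW_s(\o)}_2^2\bigr) \longrightarrow 0 \quad\text{as } s\to -\infty, \]
since exponential decay in $(t-s)$ dominates polynomial growth in $|s|$. Consequently there exists $s(\varrho) = s(\varrho,\o,t) \le t$ such that this term is bounded by $1$ for all $s \le s(\varrho)$.

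Finally I would set
\[ q_1^{(t)}(\o) := \bigl(q_1^{(\b,t)}(\o) + 1\bigr)^{1/2}, \]
which is $\F$-measurable (as $q_1^{(\b,t)}$ is), and note that for all $x \in H$ with $\norm{x}_H \le \varrho$ and all $s \le s(\varrho)$ the bound $\norm{Z(t,s,x,\o)}_H^2 \le (q_1^{(t)}(\o))^2$ holds, i.e.\ $Z(t,s,x,\o) \in \bar B_H(0,q_1^{(t)}(\o))$. Since this is a bounded set in $H$ independent of $s$ and $x$ (for $\norm{x}_H\le \varrho$), this delivers the claimed bounded absorption. There is no real obstacle here; the only care needed is to ensure the measurability of the absorbing radius, which is inherited from Corollary 2.2, and to separate the $\varrho$-dependence (which is absorbed into the absorption \emph{time} $s(\varrho)$) from the $\o$-dependence of the absorbing set's \emph{radius}.
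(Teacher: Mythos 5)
Your estimates are all correct, and up to the last step you follow exactly the paper's route: apply Corollary 2.2, write $Z(s,s,x,\o)=x-QW_s(\o)$, and split $e^{-\tilde\b(t-s)}\norm{Z_s}_H^2 \le 2\varrho^2 e^{-\tilde\b(t-s)} + 2e^{-\tilde\b(t-s)}\norm{QW_s(\o)}_H^2$. The problem is in how you dispose of the noise term. You send $s\to-\infty$ for \emph{fixed} $\o$ and choose $s(\varrho)=s(\varrho,\o,t)$ so that the whole initial contribution is $\le 1$; this makes the absorption time depend on $\o$. But the corollary's quantifiers are ``for each $\varrho>0$ there is an $s(\varrho)\le t$ such that for all $\o\in\O_0$\dots'', i.e.\ $s(\varrho)$ must be deterministic. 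The rate at which $e^{-\tilde\b(t-s)}\norm{QW_s(\o)}_H^2$ tends to $0$ is genuinely $\o$-dependent (only the polynomial-growth bound of Remark 1.6(i) holds on $\O_0$, with an $\o$-dependent constant), so no uniform-in-$\o$ choice of $s(\varrho)$ comes out of your argument. This deterministic $s(\varrho)$ is then used downstream (Corollaries 2.4 and 3.4 and the proof of Theorem 4.1), so the mismatch is not purely cosmetic, even though an $\o$-dependent absorption time would still be compatible with Definition 1.8(ii).

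The fix is exactly what the paper does: push the $\o$-dependence into the \emph{radius} rather than the \emph{time}. Set
\[
q_1^{(t)}(\o) := 1 + q_1^{(\b,t)}(\o) + 2\sup_{s\le t}\Bigl(e^{-\tilde\b(t-s)}\norm{QW_s(\o)}_H^2\Bigr),
\]
which is finite for $\o\in\O_0$ precisely because the exponential decay beats the polynomial growth from Remark 1.6(i), and is measurable (the supremum may be taken over rational $s$ by continuity). Then choose $s(\varrho)\le t$ deterministically so that $2\varrho^2 e^{-\tilde\b(t-s)}\le 1$ for all $s\le s(\varrho)$. (Also, since the estimate controls $\norm{Z_t}_H^2$, the radius of the ball should be the square root of this quantity, as you correctly noted.)
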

\begin{proof}
  Let $\b := \frac{a}{2}$. By Corollary \ref{cor:H-bound}, we have for $\tilde{\b} := \frac{\b}{c^2}$
  \begin{align*}
    \norm{Z_t}_H^2 
      &\leq e^{-\tilde{\b} (t-s)}\norm{Z_s} _H^2 + q_1^{(\b,t)}\\
      \leq & 2 e^{-\tilde{\b} (t-s)}(\norm{x}_H^2 + \norm{QW_s}_H^2) + q_1^{(\b,t)}\\
      \leq &2 \varrho ^2 e^{-\b (t-s)}+ 2 e^{-\tilde{\b} (t-s)}\norm {QW_s}_{H}^2 + q_1^{(\b,t)},
  \end{align*}
  for all $t \ge s.$ Hence the result follows with
    \[q_1^{(t)} := 1+q_1 ^{(\b,t)} + 2 \sup_{s \leq t}(e^{-\tilde{\b} (t-s)}\norm{QW_s}_{H}^2)\]
  and $s(\varrho ) \le t$ chosen so that $2 \varrho ^2 e^{-\tilde{\b} (t-s)} \le 1$ for all $s \le s(\varrho)$.
\end{proof}

We will need the following auxiliary estimate.

\begin{corollary}\label{cor:aux_estimate}
  There is an $\F$--measurable function $q : \O \to \R_+$ such that for each $\varrho >0$ there exists
  $ s(\varrho ) \leq -1 $ such that for all $\o \in \O_0, \; x\in H$ with $\norm{x}_H\leq \varrho$
  \begin{equation*}
     \int_{-1}^0 \norm { S(r,s,\o)x }_{2}^2 dr \leq q(\o) \text{ for all }s \leq s(\varrho).
  \end{equation*}
\end{corollary}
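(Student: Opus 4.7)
The plan is to combine the differential inequality of Theorem \ref{thm:H-bound} with the absorption result of Corollary \ref{cor:bdd-absorption}, then pass from $Z$ to $S$ using $S_r=Z_r+QW_r$. Roughly, Theorem 2.1 already provides a term of the form $-\b\int \norm{Z_r}_2^2 dr$ on the right-hand side, so integrating the $\norm{Z_r}_2^2$-norm on $[-1,0]$ is essentially free once we have a uniform $H$-bound at time $t_1=-1$, and the latter is exactly what Corollary \ref{cor:bdd-absorption} provides.

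In detail, I would fix $\b:=\frac{a}{2}$ as in Corollary \ref{cor:bdd-absorption}, and apply that corollary with $t=-1$ to obtain an $\mcF$-measurable $q_1^{(-1)}(\o)$ and, for given $\varrho>0$, an $s(\varrho)\le -1$ such that
\[\norm{Z(-1,s,x,\o)}_H^2\le q_1^{(-1)}(\o)\quad\forall\,\o\in\O_0,\ \norm{x}_H\le\varrho,\ s\le s(\varrho).\]
Then I would apply Theorem \ref{thm:H-bound} on $[t_1,t_2]=[-1,0]$ (which requires $s\le -1$, guaranteed since $s(\varrho)\le -1$) to get
\[\b\int_{-1}^{0}\norm{Z(r,s,x,\o)}_2^2\,dr\le \norm{Z(-1,s,x,\o)}_H^2 + \int_{-1}^{0} p_1^{(\b)}(r,\o)\,dr.\]
Combining these two bounds yields a $\varrho$- and $x$-independent bound on $\int_{-1}^{0}\norm{Z_r}_2^2\,dr$ for all $s\le s(\varrho)$.

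To get the estimate for $S$ rather than $Z$, I would use $S_r=Z_r+QW_r$ and the elementary inequality $\norm{S_r}_2^2\le 2\norm{Z_r}_2^2+2\norm{QW_r}_2^2$, which gives
\[\int_{-1}^{0}\norm{S(r,s,\o)x}_2^2\,dr\le \frac{2}{\b}\bigl(q_1^{(-1)}(\o)+\textstyle\int_{-1}^{0}p_1^{(\b)}(r,\o)\,dr\bigr) + 2\int_{-1}^{0}\norm{QW_r}_2^2\,dr.\]
Taking the right-hand side as the definition of $q(\o)$, its $\mcF$-measurability follows from the measurability of $q_1^{(-1)}$, $p_1^{(\b)}(r,\cdot)$ and $\norm{QW_r(\cdot)}_2$, together with Fubini; finiteness on $\O_0$ follows from Remark \ref{1.1}(i) (polynomial growth of $\norm{QW_r}_2^p$ and $\norm{\nabla QW_r}_p^p$, which dominate the terms appearing in $p_1^{(\b)}$).

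The only mildly delicate point is bookkeeping: one must make sure that the same $s(\varrho)$ produced by Corollary \ref{cor:bdd-absorption} (with $t=-1$) is used here, and that the construction does not reintroduce an $x$-dependence in $q(\o)$; both are immediate from the way Theorem \ref{thm:H-bound} and Corollary \ref{cor:bdd-absorption} are formulated (their constants are independent of $x$, $\o$, $s$). There is no real analytic obstacle — the work was already done in Theorem \ref{thm:H-bound} and Corollary \ref{cor:bdd-absorption}; this corollary is a direct consequence.
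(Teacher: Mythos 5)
Your proposal is correct and follows essentially the same route as the paper: fix $\b=\frac a2$, apply Theorem \ref{thm:H-bound} on $[-1,0]$, bound $\norm{Z(-1,s,x,\o)}_H^2$ via Corollary \ref{cor:bdd-absorption} with $t=-1$, and convert from $Z$ to $S$ using $\norm{S_r}_2^2\le 2\norm{Z_r}_2^2+2\norm{QW_r}_2^2$. The resulting $q(\o)=\frac2\b q_1^{(-1)}(\o)+\frac2\b\int_{-1}^0 p_1^{(\b)}(r,\o)\,dr+2\int_{-1}^0\norm{QW_r(\o)}_2^2\,dr$ is exactly the one the paper uses.
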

\begin{proof}
  Using \eqref{eqn:H-bound} in Theorem 2.1 with $t_1=-1,t_2=0$ and then using Corollary \ref{cor:bdd-absorption} for $t=-1$ yields for $\b = \frac{a}{2}$ and $s \leq s(\varrho)$, where $s(\varrho) \leq -1$ is as in Corollary 2.3,
  \begin{align*}
    \b \int_{-1}^{0} \norm {S(r,s,\o)x}_{2}^2 dr 
      &\leq 2 \norm{Z(-1,s,x,\o)}_H^2 + 2 \int_{-1}^{0} p_1^{(\b)}(r,\o) dr + 2\b \int^0_{-1} \norm{QW_r(\o)}^2_2 dr \\
      &\leq \b q (\o),
  \end{align*}
where $q(\o):= \frac{2}{\b} q_1^{(-1)} (\o) + \frac{2}{\b} \int_{-1}^0 p_1^{(\b)} (r,\o) dr + 2 \int_{-1}^0 \norm{QW_r(\o)}^2_2 dr.$
\end{proof}

%---------------------------------------------------------------------------------
%--------------------------   L^2 estimate    ------------------------------------
%---------------------------------------------------------------------------------

\section{Estimate for $\norm{S_t}_{2}$ and compact absorption}
In this section we utilize the stronger assumptions from Hypothesis \ref{phi_conditions_2} and \ref{phi_conditions_3} in order to obtain absorption with respect to the strong norm $\|\cdot\|_2$. By the compact embedding $L^2(\L) \hookrightarrow H$ this will prove the required compactness properties.
\begin{theorem}\label{thm:L2-bound} 
 Suppose that either Hypothesis \ref{phi_conditions_2} or Hypothesis \ref{phi_conditions_3} holds. Let $\a> 0$, with $\a \in (0, \frac{\a \lambda_1}{2}]$ if $p=1$. Then there is a function $p_2^{(\a)}: \R\times \O \to \R$ such that $p_2^{(\a)}(t,\cdot)$ is $\F$--measurable for each $t \in \R$, $p_2^{(\a)}(\cdot,\o$) is continuous and of at most polynomial growth for $|t| \to \infty,\ \o \in \O$ and for all  $x \in L^2 (\L), \o \in \O_0$ (cf. Remark \ref{1.1} (i)), $s \in \R$:
  \begin{align}\label{eqn:L2-bound}
     \norm {Z(t_2,s,x,\o)}_{2}^2 \leq \norm {Z(t_1,s,x,\o)} _{2}^2 -\a \int^{t_2}_{t_1} \norm{Z(r,s,x,\o)}^2_2 \; dr + \int^{t_2}_{t_1} p _2^{(\a)}(r, \o) dr \tag{3.1} \\ \text{for all } s \le t_1 \le t_2. \nonumber
  \end{align}
  In particular, $t \to Z_t$ is strongly right continuous in $L^2(\L)$.
\end{theorem}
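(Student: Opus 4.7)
The natural starting point is the approximate It\^o-type identity \eqref{eqn:approx-ito}. Writing $J_n := (1-\tfrac{1}{n}\Delta)^{-1}$ and using $n(J_n-I) = \Delta J_n$ on $L^2(\Lambda)$, it reads
\[
 \|Z_{t_2}\|_{H_{1/n}}^2 - \|Z_{t_1}\|_{H_{1/n}}^2 \;=\; 2\int_{t_1}^{t_2} \langle \Phi(S_r),\,\Delta J_n Z_r\rangle\,dr.
\]
The plan is to integrate by parts once in space (using the Dirichlet boundary values of $J_n Z_r \in H^{1,2}_0$), pass to the limit $n\to\infty$ using $\|Z_t\|_{H_{1/n}}^2 \uparrow \|Z_t\|_2^2$ and convergence of $J_n Z_r$ to $Z_r$ in a suitable Sobolev sense, and invoke a chain-rule-type inequality
\[
 \int_\Lambda \nabla\Phi(S_r)\cdot\nabla S_r\,d\xi \;\ge\; \|\nabla\zeta(S_r)\|_2^2 \qquad\text{for a.e.\ } r,
\]
which under Hypothesis \ref{phi_conditions_3} follows from the $C^1$-chain rule and $(\zeta')^2 = \Phi'$, and under Hypothesis \ref{phi_conditions_2} from (A1)' by a difference-quotient/density argument. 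Splitting $Z_r = S_r - QW_r$, this produces
\[
 \tfrac{d}{dr}\|Z_r\|_2^2 \;\le\; -2\|\nabla\zeta(S_r)\|_2^2 + 2\int_\Lambda \nabla\Phi(S_r)\cdot\nabla QW_r\,d\xi.
\]

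The noise correction must be handled separately in the two cases. Under Hypothesis \ref{phi_conditions_2} with $QW_r \in C_0^2$, one further integration by parts together with (A3) and Young's inequality bounds the last term by $\varepsilon\|S_r\|_{p+1}^{p+1} + C_\varepsilon(\|\Delta QW_r\|_{p+1}^{p+1}+1)$. Under Hypothesis \ref{phi_conditions_3} with only $QW_r \in C_0^1$, I decompose $\nabla\Phi(S_r) = \sqrt{\Phi'(S_r)}\,\nabla\zeta(S_r)$ and use Cauchy--Schwarz together with $2ab \le a^2 + b^2$ to obtain
\[
 2\Bigl|\int_\Lambda \nabla\Phi(S_r)\cdot\nabla QW_r\,d\xi\Bigr| \;\le\; \|\nabla\zeta(S_r)\|_2^2 + \int_\Lambda \Phi'(S_r)|\nabla QW_r|^2\,d\xi,
\]
controlling the second term via \eqref{1.5} and H\"older/Young by $\varepsilon\|S_r\|_{p+1}^{p+1} + C_\varepsilon\|\nabla QW_r\|_{p+1}^{p+1}$. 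Choosing $\varepsilon$ small enough in either case yields
\[
 \tfrac{d}{dr}\|Z_r\|_2^2 \;\le\; -\|\nabla\zeta(S_r)\|_2^2 + g(r,\omega),
\]
with $g$ polynomial in $\|QW_r\|_2$, $\|\nabla QW_r\|_{p+1}$ and, under Hypothesis \ref{phi_conditions_2}, $\|\Delta QW_r\|_{p+1}$.

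To convert this to a bound in $\|Z_r\|_2^2$, note that $\zeta(S_r) \in H^{1,2}_0(\Lambda)$ (since $\zeta(0)=0$ and $S_r$ satisfies Dirichlet boundary conditions), so Poincar\'e and (A2)' give $\|\nabla\zeta(S_r)\|_2^2 \ge \lambda_1(a\|S_r\|_{p+1}^{p+1}-c|\Lambda|)$. For $p>1$ the super-quadratic growth yields, for every $\alpha>0$, a constant $C_\alpha$ with $\lambda_1 a|y|^{p+1} \ge 2\alpha|y|^2 - C_\alpha$; for $p=1$ this holds precisely when $\alpha \le a\lambda_1/2$. Combining with $\|S_r\|_2^2 \ge \tfrac12\|Z_r\|_2^2 - \|QW_r\|_2^2$ and merging the extra noise contributions into $g$ produces $\tfrac{d}{dr}\|Z_r\|_2^2 \le -\alpha\|Z_r\|_2^2 + p_2^{(\alpha)}(r,\omega)$, which integrates to \eqref{eqn:L2-bound}; Remark \ref{1.1}(i) then guarantees the claimed polynomial growth of $p_2^{(\alpha)}(\cdot,\omega)$. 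For strong right continuity of $Z_t$ in $L^2$, continuity of $t\mapsto\|Z_t\|_{H_{1/n}}^2$ via \eqref{eqn:approx-ito} makes $t\mapsto\|Z_t\|_2^2=\sup_n\|Z_t\|_{H_{1/n}}^2$ lower semicontinuous, while \eqref{eqn:L2-bound} yields $\limsup_{t\downarrow t_0}\|Z_t\|_2^2 \le \|Z_{t_0}\|_2^2$; combined with continuity of $Z$ in $H$, this upgrades to strong right continuity in $L^2(\Lambda)$.

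The main obstacle is making rigorous the passage to the limit $n\to\infty$ in \eqref{eqn:approx-ito} together with the chain-rule/monotonicity step for $\nabla\Phi(S_r)\cdot\nabla S_r$. Under Hypothesis \ref{phi_conditions_3} the $C^1$-structure and the growth bound \eqref{1.5} make this essentially routine; under Hypothesis \ref{phi_conditions_2}, where $\Phi$ need not be differentiable, one has to exploit $(\Phi(t)-\Phi(s))(t-s)\ge(\zeta(t)-\zeta(s))^2$ via a Yosida-type approximation of $\Phi$ (keeping track of a compatible $\zeta$) and close the estimate by lower semicontinuity. This is precisely the regularity issue flagged in the introduction.
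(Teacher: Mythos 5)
Your overall architecture (start from \eqref{eqn:approx-ito}, split $Z_r=S_r-QW_r$, extract a good term $-\|\nabla\zeta(S_r)\|_2^2$, treat the noise term differently under the two hypotheses, then Poincar\'e plus (A2)' to get $-\alpha\|Z_r\|_2^2$, and the same $\limsup$ argument for right continuity) matches the paper. But the central step of your plan contains a genuine gap: the inequality
\[
\int_\Lambda \nabla\Phi(S_r)\cdot\nabla S_r\,d\xi \;\ge\; \|\nabla\zeta(S_r)\|_2^2
\]
is built on objects that do not exist in the spaces where the solution lives. The solution satisfies only $S_r\in L^{p+1}_{loc}([s,\infty);V)$ with $V=L^{p+1}(\Lambda)$; there is no spatial Sobolev regularity for $S_r$, so $\nabla S_r$ is not defined, and the existence of $\nabla\zeta(S_r)$ (and of $\nabla\Phi(S_r)$) is precisely one of the main conclusions to be established, not an input. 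You flag this as ``the main obstacle,'' but the remedies you sketch (Yosida approximation of $\Phi$ under Hypothesis \ref{phi_conditions_2}, ``routine'' $C^1$ chain rule under Hypothesis \ref{phi_conditions_3}) still require pairing against $\nabla S_r$ and therefore do not close the argument.

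The paper's device is different and avoids ever differentiating $S_r$. In Case 1 one keeps the resolvent in place and uses the kernel representation of the approximating Dirichlet form: writing $\langle\Phi(S_r),n(1-\tfrac1n\Delta)^{-1}S_r-nS_r\rangle$ as a double integral against the kernel $p_n(\xi,d\tilde\xi)$, the $\zeta$-weak monotonicity (A1)' is applied \emph{pointwise} to the differences $\Phi(S_r(\tilde\xi))-\Phi(S_r(\xi))$, yielding the bound $-\mathcal{E}^{(n)}(\zeta(S_r),\zeta(S_r))$ with no derivatives taken. The resulting uniform bound $\sup_n\mathcal{E}^{(n)}(\zeta(S_r),\zeta(S_r))<\infty$ then gives $\zeta(S_r)\in H^1_0(\Lambda)$ \emph{a posteriori} by Dirichlet-form theory, and $\mathcal{E}^{(n)}\to\int|\nabla\zeta(S_r)|^2$. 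In Case 2 the needed a priori regularity $\Phi(S_r)\in H_0^{1,\frac{p+1}{p}}$ (so that the single integration by parts on the noise term is legitimate) is \emph{not} obtained by a chain rule on the solution but by a separate Galerkin argument (Lemma \ref{lemma:regularity}): the approximants $S_r^n$ are smooth, the chain-rule identity $\langle\Delta\Phi(S^n_r),S^n_r\rangle=-\|\nabla\zeta(S^n_r)\|_2^2$ holds for them, and the estimate survives the weak limit. Your alternative bound $2|\int\nabla\Phi(S_r)\cdot\nabla QW_r|\le\|\nabla\zeta(S_r)\|_2^2+\int\Phi'(S_r)|\nabla QW_r|^2$ is a fine variant of the paper's Young-inequality step \emph{once} these regularity facts are in hand, but without the kernel/Galerkin machinery your proof does not get off the ground.
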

\begin{proof}
Again we fix $x,\o,s$ and use the abbreviation $Z_r:= Z(r,s,x,\o), \ S_r := S(r,s,\o)x$ for $r \in [s,\infty)$. But all constants appearing in the proof below are independent of $x,\o$ and $s$.\\
  \textbf{Case 1}: Assume Hypothesis \ref{phi_conditions_2}.

  Let $t_1 \ge s$ such that $Z_{t_1} \in L^2(\L)$ and $t_2 \ge t_1$. \eqref{eqn:approx-ito} implies
  \begin{align}\label{eqn_L^2_1}
    \norm {Z_{t_2}}^2_{H_{\frac1n}} = \norm{Z_{t_1}}^2_{H_{\frac1n}} 
    &+ 2 \int^{t_2}_{t_1} \langle \Phi(S_r),n(1-\tfrac1n \Delta)^{-1} S_r -n S_r\rangle dr \tag{3.2} \\
    & - 2 \int^{t_2}_{t_1} \langle \Phi(S_r), \Delta(1-\tfrac1n\Delta)^{-1}Q W_r\rangle dr.\nonumber
   \end{align}

  We now recall a formula given in \cite[Lemma 5.1 (ii)]{rw}. Let $q,q' >1$ such that $\frac{1}{q}+\frac{1}{q'}=1$, $f \in L^q(\L)$, $g \in L^{q'}(\L)$ and $p_n(\xi,d\tilde{\xi})$ the kernel corresponding to $(1-\tfrac1n \Delta)^{-1}$ (cf. \cite[Lemma 5.1 (i)]{rw}). Using the symmetry of $(1-\tfrac1n \Delta)^{-1}$ in $L^2(\L)$ we obtain
  \begin{align*}
    \langle f, g-(1-\tfrac1n \Delta)^{-1}g \rangle
    &= \frac{1}{2} \int_\L \int_\L (f(\tilde{\xi})-f(\xi))(g(\tilde{\xi})-g(\xi))p_n(\xi,d\tilde{\xi}) d\xi \\
      &\hskip13 pt + \int_\L (1-(1-\tfrac1n \Delta)^{-1}1)fg d\xi.
  \end{align*}
  Using this and proceeding analogously to the calculation following formula (5.6) in \cite{rw} yields for $dr$-a.e. $r \in [s,\infty)$
  \begin{align*}%\label{eqn-E_n}
    \langle \Phi(S_r),n(1-\tfrac1n \Delta)^{-1} S_r -n S_r \rangle 
      &= -n \langle \Phi(S_r), S_r - (1-\tfrac1n \Delta)^{-1} S_r\rangle \nonumber\\
      &= -\frac{n}{2} \int_\L \int_\L [ \Phi(S_r(\tilde{\xi}))-\Phi(S_r(\xi)) ] [ S_r(\tilde{\xi})- S_r(\xi)]p_n(\xi,d\tilde{\xi})d\xi \nonumber\\
      &\ \ \ \ -n \int_\L(1-(1-\frac{1}{n}\Delta)^{-1}1)\Phi(S_r)S_r d\xi \nonumber\\
      &\le -\frac{n}{2} \int_\L \int_\L (\zeta(S_r(\tilde{\xi}))-\zeta(S_r(\xi)))^2 p_n(\xi,d\tilde{\xi})d\xi \\ 
      &\ \ \ \ -n \int_\L(1-(1-\frac{1}{n}\Delta)^{-1})\zeta(S_r)^2 d\xi \nonumber\\ 
      &= -n \langle \zeta(S_r), (1-(1-\frac{1}{n}\Delta)^{-1}) \zeta(S_r)  \rangle \nonumber\\
      &= - \mathcal{E}^{(n)}(\zeta(S_r),\zeta(S_r)), \nonumber
  \end{align*}
    where $(\df^{(n)}, \dom(\df^{(n)}))$ is the closed coercive form on $L^2(\L)$ with $\dom(\df^{(n)}) = H_0^1(\L)$ and generator $n(1-(1-\frac{1}{n}\Delta)^{-1})=\D(1-\frac{1}{n}\Delta)^{-1}$. We obtain from \eqref{eqn_L^2_1}:
  \begin{align}\label{eqn_L^2_3}
    &\norm {Z_{t_2}}^2_{H_{\frac1n}} + 2\int^{t_2}_{t_1}\mathcal{E}^{(n)}(\zeta(S_r),\zeta(S_r))dr \tag{3.3} \\
    &\le \norm{Z_{t_1}}^2_{H_{\frac1n}} - 2 \int^{t_2}_{t_1} \langle \Phi(S_r), \Delta(1-\tfrac1n\Delta)^{-1}Q W_r\rangle dr.\nonumber
  \end{align}

  Next we prove an upper bound for the second term on the right hand side of \eqref{eqn_L^2_3}. Note that we shall make use of the assumption 
  $QW_t \in C^2_0$ here. Using Young's inequality and (A3), for all $\epsilon >0$ and some $C_\epsilon, C_1, C_2 >0$ we obtain for $dr$--a.e. $r \in [s,\infty)$

  \begin{align*}
    |\langle \Phi(S_r), \Delta(1-\frac{1}{n}\Delta)^{-1}QW_r \rangle|
      &= |\langle \Phi(S_r), (1-\frac{1}{n}\Delta)^{-1} \Delta QW_r \rangle| \\
      &\le \epsilon \int_\L |\Phi(S_r)|^{\frac{p+1}{p}} d\xi +  C_\epsilon \int_\L |((1-\frac{\Delta}{n})^{-1} \Delta QW_r )|^{p+1} d\xi \\
      %&\le \epsilon C_1 \|S_s\|_{p+1}^{p+1} + C_\epsilon \|(1-\frac{\Delta}{n})^{-1} \Delta QW_r\|_{p+1}^{p+1} + C_2 \\
      &\le \epsilon C_1 \|S_r\|_{p+1}^{p+1} + C_\epsilon \|\Delta QW_r\|_{p+1}^{p+1} + C_2 .
  \end{align*}

  Hence
  \begin{align}\label{eqn_L^2_4}
    &\norm {Z_{t_2}}^2_{H_{\frac1n}} + 2\int^{t_2}_{t_1}\mathcal{E}^{(n)}(\zeta(S_r),\zeta(S_r))dr \tag{3.4} \\
    & \le \norm{Z_{t_1}}^2_2 + 2 \int^{t_2}_{t_1} \left[ \epsilon C_1 \|S_r\|_{p+1}^{p+1} + C_\epsilon \|\Delta QW_r\|_{p+1}^{p+1} + C_2 \right] dr < \infty.\nonumber
  \end{align}
We note that by \eqref{eqn:lp-bdd} the right hand side of \eqref{eqn_L^2_4} is indeed finite. Since $\df^{(n)} (\zeta(S_r),\zeta(S_r))$ is increasing in $n$, we conclude that $\sup\limits_{n \in \N} \mathcal{E}^{(n)}(\zeta(S_r),\zeta(S_r)) < \infty$ for d$r$-a.e. $r \in [t_1,\infty)$. By (A2)' and (A3) we know that for some $c_1, c_2 \ge 0$
  \begin{equation*}
    \zeta(s)^2 \le \Phi(s)s \le c_1 |s|^{p+1} + c_2 |s|.
  \end{equation*}
  Since $S_r \in L^{p+1}(\L)$ this implies $\zeta(S_r) \in L^2(\L)$ for d$r$-a.e. $r \in [t_1,\infty)$. We now recall the following result from the theory of Dirichlet forms: Let $(\df, \dom(\df))$ be the closed coercive form on $L^2(\L)$ given by $\df(f,g) = \int_\L \langle \nabla f, \nabla g \rangle_{\R^d}d\xi$ for $f,g \in \dom(\df) = H_0^1(\L)$. From \cite[Chap. I, Theorem 2.13]{rma} we know for $f \in L^2(\L)$, that $f \in \dom(\df) = H_0^1(\L)$ iff $\sup\limits_{n \in \N} \df^{(n)}(f,f) < \infty$. Moreover, $\lim\limits_{n \rightarrow \infty} \df^{(n)}(f,g) = \df(f,g) = \int_{\L} \langle \nabla f, \nabla g \rangle_{\R^d}d\xi$ for all $f,g \in \dom(\df)$. Hence we obtain for d$r$-a.e. $r \in [t_1,\infty)$ that $\zeta(S_r) \in \dom(\df) = H_0^1(\L)$ and that
  \begin{equation*}
    \lim_{n \rightarrow \infty} \df^{(n)}(\zeta(S_r),\zeta(S_r)) = \df(\zeta(S_r),\zeta(S_r)) = \int_\L |\nabla \zeta(S_r)|_{\R^d}^2 d\xi. 
  \end{equation*}
  Using Fatou's lemma and taking $n \rightarrow \infty$ in \eqref{eqn_L^2_4} yields 
  \begin{align}\label{2.6}
    &\norm{Z_{t_2}}^2_{2} + 2 \int^{t_2}_{t_1} \int_\L |\nabla \zeta(S_r)|_{\R^d}^2 d\xi \ dr \tag{3.5} \\
    &\le \norm{Z_{t_1}}^2_{2} + 2\epsilon C_1 \int^{t_2}_{t_1} \|S_r\|_{p+1}^{p+1} dr + \int^{t_2}_{t_1} (C_\epsilon \|\Delta QW_r\|_{p+1}^{p+1} + C_2) dr\nonumber.
  \end{align}
  Since $Z_s = x-QW_s \in L^2(\L)$, for all $t_1 \ge s$ we obtain $Z_{t_1} \in L^2(\L)$ and thus \eqref{2.6} holds for all $t_2 \ge t_1 \ge s$. %In particular $Y_s \in L^2(\L)$ for all $s \ge S$ and \eqref{2.6} holds with $S$ replaced by $s$, for all $s \ge S$.

Choosing $\epsilon = \frac{a \lambda_1 }{2C_1}$, applying Poincar\'e's inequality and using the fact that if $p>1$ for each $\a > 0$ we can find
$\tilde{C_\a}\ge 0$ such that for all $y \in \R$ one has $a \l_1 |y|^{p+1} \ge 2\a |y|^2 - \tilde{C}_\a$; the same is true for $p=1$, if $\a \in (0, \frac{\a \lambda_1}{2}]$. We obtain from (A2)' that
\begin{align*}
  \norm{Z_{t_2} }^2_{2}
    &\le \norm{Z_{t_1}}^2_{2} - 2 \lambda_1 \int^{t_2}_{t_1} \|\zeta(S_r)\|_{2}^2 dr + a\lambda_1 \int^{t_2}_{t_1} \|S_r\|_{p+1}^{p+1} dr + \int^{t_2}_{t_1} (C_\epsilon\|\Delta QW_r\|_{p+1}^{p+1} + C_2) dr \\
    &\le \norm{Z_{t_1}}^2_{2} - a \lambda_1 \int^{t_2}_{t_1} \|S_r\|^{p+1}_{p+1}  dr + \int^{t_2}_{t_1} (C_\epsilon\|\Delta QW_r\|_{p+1}^{p+1} + C_2 + c) dr \\
    &\le \norm{Z_{t_1}}^2_{2} - 2\a \int^{t_2}_{t_1} \|S_r\|^2_2  dr + \int^{t_2}_{t_1} (C_\epsilon\|\Delta QW_r\|_{p+1}^{p+1} + C_2 + c + \tilde{C}_\a) dr.
\end{align*}
Now
\[\norm{Z_r}^2_{2} = \norm{S_r - QW_r }^2_{2} \leq 2\left( \norm {S_r}^2_{2} + \norm{QW_r}^2_{2}\right), \]
whence
\begin{align}\label{eqn_final_bound}
  \norm{Z_{t_2} }^2_{2} &\le \norm{Z_{t_1}}^2_{2} - \alpha \int^{t_2}_{t_1} \|Z_r\|^2_2  dr + \int^{t_2}_{t_1} p_2^\alpha(r,\omega) dr, \tag{3.6}
\end{align}
for $\alpha >0$ arbitrary and 
$$
  p_2^{(\a)}(r,\o) :=
  \begin{cases}
    C_\epsilon\|\Delta QW_r\|_{p+1}^{p+1} + C_2 + c + \tilde{C}_\a +2\a \norm{ QW_r}^2 _{2} & \text{, if } \o \in \O_0 \\
    0 & \text{, else. }
  \end{cases}
$$

To obtain right continuity of $Z_t$ in $L^2(\L)$ first note that by \eqref{2.6} applied for $t_1 = s$ and continuity of $Z_t$ in $H$ we obtain weak continuity in $L^2(\L)$. Now for $t_n \downarrow t$ by  \eqref{2.6} applied to $t_1 = t$ we obtain 
  \[  \limsup_{n \rightarrow \infty} \norm{Z_{t_n}}^2_2 \le \norm{Z_t}^2_2; \]
which implies the right continuity of $Z_t$ in $L^2(\L)$.

% ---------------- 2nd Case  ----------------------
\vskip25pt
\textbf{Case 2}: Assume Hypothesis \ref{phi_conditions_3}.

Let $\zeta$ be as defined in Lemma \ref{lemma:hyp1.4->hyp1.1} and again let $t_1 \ge s$ such that $Z_{t_1} \in L^2(\L)$ and $t_2 \ge t_1$. In order to prove \eqref{eqn:L2-bound} in the case $QW_t \in C^1_0(\L)$ we need to be more careful when bounding the second term on the right hand side of \eqref{eqn_L^2_3}. For this we need the regularity result proved in Lemma \ref{lemma:regularity} below, which implies that for every $\epsilon > 0$ there exist constants $C_\epsilon, \tilde{C}_\epsilon(=C_\epsilon (p),\tilde{C}_\epsilon(p))$ such that for d$r$-a.e. $r \in [s,\infty)$
\begin{align}\label{3.6'}
  - \langle \Phi(S_r), \Delta(1-\frac{1}{n}\Delta)^{-1}QW_r \rangle \nonumber
    &= \langle \nabla \Phi(S_r), \nabla(1-\frac{1}{n}\Delta)^{-1} QW_r \rangle \nonumber\\
    &\le \epsilon\|\nabla \Phi(S_r)\|_\frac{p+1}{p}^\frac{p+1}{p} + C_\epsilon \|\nabla(1-\frac{1}{n}\Delta)^{-1} QW_r\|_{p+1}^{p+1} \tag{3.6'}\\
    &\le \epsilon\|\nabla \Phi(S_r)\|_\frac{p+1}{p}^\frac{p+1}{p} + \tilde{C}_\epsilon \|\nabla QW_r\|_{p+1}^{p+1}. \nonumber
\end{align}
Now using Lemma \ref{lemma:regularity} and \eqref{3.6'} with $\ve = 1$ in \eqref{eqn_L^2_3} yields for some constants $c,C \in \R$
\begin{align*}
   & \norm {Z_{t_2}}^2_{H_{\frac1n}} + 2\int^{t_2}_{t_1}\mathcal{E}^{(n)}(\zeta(S_r),\zeta(S_r))dr \\
   & \le \norm{Z_{t_1}}^2_{H_{\frac1n}} + 2 \int^{t_2}_{t_1} \left[ \|\nabla \Phi(S_r)\|_\frac{p+1}{p}^\frac{p+1}{p} + \tilde{C}_1 \|\nabla QW_r\|_{p+1}^{p+1}\right] dr \\
   & \le c\norm{Z_{t_1}}^2_2 + C \int^{t_2}_{t_1} (\|\nabla QW_r\|_{p+1}^{p+1}+1) dr < \infty\ .
\end{align*}
Now we can proceed as after \eqref{eqn_L^2_4} to deduce $\zeta(S_r) \in \dom(\df) = H_0^1(\L)$ and
  \begin{equation*}
    \lim_{n \rightarrow \infty} \df^{(n)}(\zeta(S_r),\zeta(S_r)) = \int_\L |\nabla \zeta(S_r)|_{\R^d}^2 d\xi,
  \end{equation*}
for d$r$-a.e. $r \in [s,\infty)$.
Since $\Phi'(r) > 0$, $\zeta(s)=\int_0^{s} \sqrt{\Phi'(r)} dr$ is $C^1(\R)$ with continuous inverse $\zeta^{-1}$. Thus
\begin{align*}
  \Phi(x) &= \int_0^x \Phi'(r) dr = \int_0^x \sqrt{\Phi'(r)} \sqrt{\Phi'(r)}dr \\
          &= \int_0^x \zeta'(r) \sqrt{\Phi'(r)}dr = \int_0^{\zeta(x)} \sqrt{\Phi'(\zeta^{-1}(r))}dr = F(\zeta(x)),
\end{align*}
where $F(s) := \int_0^{s} \sqrt{\Phi'(\zeta^{-1}(r))}dr$. Since $F \in C^1(\R)$, $\zeta(S_r) \in H_0^1(\L)$ for d$r$-a.e. $r \in [s,\infty)$ and 
$F'(\zeta(S_r))\nabla \zeta(S_r) = \sqrt{\Phi'(S_r)}\nabla \zeta(S_r) \in L^1(\L)$ (by (1.4)), we have $\Phi(S_r) = F(\zeta(S_r)) 
\in H^{1,1}_0(\L)$ for d$r$-a.e. $r \in [s,\infty)$ with
\begin{equation}\label{2.9}
  \nabla \Phi(S_r) = \sqrt{\Phi'(S_r)}\nabla \zeta(S_r) \in L^1(\L).\tag{3.7}
\end{equation}
  By (A2)' and \eqref{1.5} there are some constants $C_1,C_2$ such that
    \[ \zeta'(r)^{2\frac{p+1}{p-1}} \le C_1 \zeta(r)^2 + C_2. \]
  Using \eqref{2.9} and then Young's and Poincar\'e's inequalities, for some constant $C$ (which may change from line to line) we have for d$r$-a.e. $r \in [s,\infty)$
\begin{align}\label{eqn:bound_phi_zeta}
  \|\nabla \Phi(S_r)\|_{\frac{p+1}{p}}^{\frac{p+1}{p}}
  &= \int_\L |\nabla\Phi(S_r)|^\frac{p+1}{p} d\xi = \int_\L |\sqrt{\Phi'(S_r)} \nabla \zeta(S_r)|^\frac{p+1}{p} d\xi \nonumber \\
  &= \int_\L |\zeta'(S_r) \nabla \zeta(S_r)|^\frac{p+1}{p} d\xi \le \|\nabla \zeta(S_r)\|_2^2 + C \int_\L |\zeta'(S_r)|^{2\frac{p+1}{p-1}} d\xi \\
  &\le \|\nabla \zeta(S_r)\|_2^2 + C \|\zeta(S_r)\|_2^2 + C \le C \|\nabla \zeta(S_r)\|_2^2 + C. \nonumber
\end{align}
We can now go on with bounding the second term on the right hand side of \eqref{eqn_L^2_3} as follows:\\
\eqref{3.6'} and \eqref{eqn:bound_phi_zeta} imply that for d$r$-a.e. $r \in [s,\infty)$
\begin{align}\label{eqn:noise-bound}
  \langle \Phi(S_r), \Delta(1-\frac{1}{n}\Delta)^{-1}QW_r \rangle \nonumber
    &\le \ve\|\nabla \Phi(S_r)\|_\frac{p+1}{p}^\frac{p+1}{p} + \tilde{C}_\ve\|\nabla QW_r\|_{p+1}^{p+1} \\
    &\le \ve C_1 \|\nabla \zeta(S_r)\|_2^2 + \ve C_2 + \tilde{C}_\ve\|\nabla QW_r\|_{p+1}^{p+1}.
\end{align}
Using this with $\ve = \frac{1}{C_1}$ in \eqref{eqn_L^2_3} and letting $n \to \infty$ yields for some constant $C$
  \begin{align}
    \norm {Z_{t_2}}^2_2 + \int^{t_2}_{t_1}\|\nabla \zeta(S_r)\|_2^2 dr \le \norm{Z_{t_1}}^2_2 +  2C \int^{t_2}_{t_1} (1 + \|\nabla QW_r\|_{p+1}^{p+1}) dr.
  \end{align}
Now we can proceed as done in the proof of Case 1 after \eqref{2.6}.
\end{proof}

\begin{remark}
  As indicated before the arguments in the proof can easily be generalized to noise $QW_t \in H_0^{2,p+1}(\L)$ $(QW_t \in H_0^{1,p+1}(\L)$ resp.$)$.
\end{remark}

\begin{lemma}\label{lemma:regularity}
  Assume Hypothesis \ref{phi_conditions_3} and let $x \in L^2(\L)$, $s \in \R$ and $\o \in \O$. Then $\Phi(S(\cdot,s,\o)x) \in L^{\frac{p+1}{p}}_{loc}([s,\infty); H_0^{1,\frac{p+1}{p}})$ and there exist constants $c > 0, C \in \R$, independent of $x,s$ and $\o$, such that 
\begin{align*}
 & \|Z(t_2,s,x,\o)\|_2^2 + c \int_{t_1}^{t_2} \|\nabla \Phi(S(r,s,\o)x)\|_{\frac{p+1}{p}}^{\frac{p+1}{p}} dr \\ \le & \|Z(t_1,s,x,\o)\|_2^2 + C \int_{t_1}^{t_2} (\|\nabla QW_r(\o)\|_{p+1}^{p+1} + 1) dr,\ \ \forall t_2 \ge t_1 \ge s. 
\end{align*}
\end{lemma}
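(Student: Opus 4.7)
The plan is to adapt the regularization scheme of Theorem \ref{thm:L2-bound} (Case 1) to the present situation in which $QW_r$ is only $C^1_0$, and to extract the $H^{1,(p+1)/p}$-regularity of $\Phi(S_r)$ as a byproduct of the resulting $L^2$-estimate. Throughout I abbreviate $Z_r := Z(r,s,x,\o)$, $S_r := S(r,s,\o)x$, and $G_n := (1-\tfrac1n\D)^{-1}$. Starting from \eqref{eqn:approx-ito} and repeating verbatim the calculation that led to (3.3) (which uses only (A1)', available here via Lemma \ref{lemma:hyp1.4->hyp1.1}), I obtain
\begin{equation*}
\norm{Z_{t_2}}^2_{H_{1/n}} + 2\int_{t_1}^{t_2} \df^{(n)}(\zeta(S_r),\zeta(S_r))\,dr \le \norm{Z_{t_1}}^2_{H_{1/n}} - 2\int_{t_1}^{t_2}\langle \Phi(S_r), \D G_n QW_r\rangle\,dr.
\end{equation*}

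The key new step is to estimate the noise pairing using only the $H^{1,p+1}$-regularity of $QW_r$. Granting for the moment that $\Phi(S_r) \in H_0^{1,(p+1)/p}(\L)$, I integrate by parts and apply Young's inequality together with the $H_0^{1,p+1}$-contractivity of $G_n$ (standard $L^p$-theory for the Dirichlet resolvent) to obtain
\begin{equation*}
|\langle \Phi(S_r), \D G_n QW_r\rangle| \le \ve\,\norm{\nabla \Phi(S_r)}_{(p+1)/p}^{(p+1)/p} + C_\ve\,\norm{\nabla QW_r}_{p+1}^{p+1}.
\end{equation*}
Next, since by \eqref{1.4} $\zeta$ has a $C^1$ inverse and $\Phi = F \circ \zeta$ for a $C^1$ function $F$, the chain rule yields $\nabla \Phi(S_r) = \sqrt{\Phi'(S_r)}\,\nabla \zeta(S_r)$; combining this with (A2)' and the growth bound \eqref{1.5} and performing exactly the Young/Poincaré manipulation of \eqref{eqn:bound_phi_zeta} produces constants $C_1, C_2$ with
\begin{equation*}
\norm{\nabla \Phi(S_r)}_{(p+1)/p}^{(p+1)/p} \le C_1\,\norm{\nabla \zeta(S_r)}_2^2 + C_2.
\end{equation*}

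Inserting this bound into the preceding one, choosing $\ve$ so small that a fixed multiple of $\norm{\nabla \zeta(S_r)}_2^2$ can be absorbed by the $\df^{(n)}$-term on the left, I let $n \to \infty$: Fatou's lemma together with $\df^{(n)}(\zeta(S_r),\zeta(S_r)) \uparrow \norm{\nabla \zeta(S_r)}_2^2$ (justified as in Case 1 of Theorem \ref{thm:L2-bound}) handles the left-hand side, while dominated convergence controls the right. A second application of the chain-rule bound converts the surviving $\norm{\nabla \zeta(S_r)}_2^2$ back into $\norm{\nabla \Phi(S_r)}_{(p+1)/p}^{(p+1)/p}$, producing the inequality claimed in the lemma; its finiteness simultaneously delivers $\Phi(S_\cdot) \in L_{loc}^{(p+1)/p}([s,\infty); H_0^{1,(p+1)/p}(\L))$.

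The main obstacle is the prior-regularity issue: justifying $\Phi(S_r) \in H_0^{1,(p+1)/p}(\L)$ for d$r$-a.e.\ $r$, needed both for the integration by parts and for the chain-rule identity, is not a direct consequence of the variational solution framework of \cite{pr}. I would circumvent this by first running the argument above for a Lipschitz, strictly monotone approximation $\Phi^\l$ of $\Phi$ (for instance a Yosida-type regularization with $(\Phi^\l)'$ bounded from above and below), whose variational solution $S^\l$ has enough extra Sobolev regularity that $\Phi^\l(S^\l_r) \in H_0^{1,2}(\L)$ holds classically and the chain rule applies without qualification. The resulting estimate being uniform in $\l$, the weak lower semi-continuity of the Sobolev norm combined with the monotone convergence $S^\l \to S$ then yields both the regularity of $\Phi(S_r)$ and the desired estimate in the limit $\l \downarrow 0$.
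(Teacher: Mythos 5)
You have correctly isolated the crux: the integration by parts against $QW_r$ and the chain-rule identity $\nabla\Phi(S_r)=\sqrt{\Phi'(S_r)}\,\nabla\zeta(S_r)$ both presuppose exactly the $H_0^{1,\frac{p+1}{p}}$-regularity of $\Phi(S_r)$ that the lemma is meant to prove, so the estimate cannot be run directly on $S$. However, your proposed way out of this circularity is only asserted, and as stated it contains a gap of the same nature as the one it is supposed to repair. For a Lipschitz, uniformly monotone $\Phi^{\lambda}$ the variational solution of \cite[Theorem 4.2.4]{pr} lives a priori only in $L^2_{loc}([s,\infty);L^2(\Lambda))$ (the Gelfand triple becomes $L^2(\Lambda)\subset H\subset (L^2(\Lambda))^*$); the claim that $\Phi^{\lambda}(S^{\lambda}_r)\in H_0^{1,2}(\Lambda)$ ``holds classically'' is itself an energy estimate of precisely the type under discussion and requires a Galerkin or difference-quotient argument to justify. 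Moreover the passage $\lambda\downarrow 0$ is left entirely unaddressed: you need \eqref{1.3}, \eqref{1.5} and the coercivity to hold for $\Phi^{\lambda}$ with constants uniform in $\lambda$ (a plain Yosida regularization degrades coercivity and has no positive lower bound on its derivative, so ``monotone convergence $S^{\lambda}\to S$'' is not automatic), a stability estimate for the solution under perturbation of the nonlinearity, and an identification of the weak limit of $\Phi^{\lambda}(S^{\lambda})$ with $\Phi(S)$ (Minty's trick or strong convergence). A smaller point: in your $n\to\infty$ step the term $\varepsilon C_1\|\nabla\zeta(S_r)\|_2^2$ can only be absorbed into $2\mathcal{E}^{(n)}(\zeta(S_r),\zeta(S_r))$ after the limit, since $\mathcal{E}^{(n)}\le\mathcal{E}$, and the finiteness of $\sup_n\mathcal{E}^{(n)}(\zeta(S_r),\zeta(S_r))$ needed to pass to the limit again presupposes the integrability of $\|\nabla\Phi(S_r)\|_{\frac{p+1}{p}}^{\frac{p+1}{p}}$.

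The paper avoids all of this by running the estimate on the spatial Galerkin approximants $Z^n_t\in\mathrm{span}\{e_1,\dots,e_n\}$ already produced in the existence proof. Since the $e_i$ are smooth Dirichlet eigenfunctions and $QW_r\in C_0^1(\Lambda)$, the function $S^n_r=Z^n_r+QW_r$ is $C^1$ and vanishes on $\partial\Lambda$, so the chain rule, the identity $\langle\Delta\Phi(S^n_r),S^n_r\rangle=-\|\nabla\zeta(S^n_r)\|_2^2$ and the integration by parts against $QW_r$ are elementary; the bound \eqref{eqn:bound_phi_zeta} then yields an estimate uniform in $n$, and the limit is taken using the weak convergences $\Delta\Phi(S^n)\rightharpoonup\Delta\Phi(S)$ in $L^{\frac{p+1}{p}}([t_1,t_2];V^*)$ and $Z^n_{t_2}\rightharpoonup Z_{t_2}$, which come for free from the existence proof, together with weak lower semicontinuity of the norms. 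If you pursue your route you will in effect have to insert this Galerkin step at the level of the regularized problem anyway, at which point the regularization of $\Phi$ buys nothing; I recommend working with the Galerkin approximation of the original equation directly.
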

\begin{proof}
  We use the Galerkin approximation and the notation used in the proof of unique existence of a solution to \eqref{eqn:o-wise} in \cite[Theorem 4.2.4]{pr}). Let $\{ e_i | i \in \N \}$ be the orthonormal basis of $H$ consisting of eigenfunctions of $\D$ on $L^2(\L)$ with Dirichlet boundary conditions. Then $e_i \in C^\infty(\L) \cap H_0^1(\L) \subseteq V$. Furthermore, let $H_n=$ span$\{e_1,...,e_n\}$ and define $P_n: V^* \to H_n \subseteq C^\infty(\L) \cap H_0^1(\L)$ by
    \[ P_n y := {\sum_{i=1}^n}\ _{V^*}\langle y,e_i \rangle_V\ e_i. \]
  Note that via the embedding $L^2(\L) \subseteq H \subseteq V^*$, ${P_n}_{|L^2(\L)}: L^2(\L) \to H_n$ is just the orthogonal projection in $L^2(\L)$ onto $H_n$.
   Let $t_1 \ge s$ such that $Z_{t_1} \in L^2(\L)$, let $Z^n_t$ denote the solution of
    \[ Z^n_t = P_n Z_{t_1} + \int_{t_1}^t P_n A_\o(r,Z^n_r) dr,\ \forall t \ge {t_1}\]
  and let $S^n_t := Z^n_t + QW_t$. By the chain rule, for all $t_2 \ge t_1$
  \begin{align}\label{eqn:galerkin_1}
    \|Z^n_{t_2}\|_2^2 
    &= \|P_n Z_{t_1}\|_2^2 + 2 \int_{t_1}^{t_2} \left< A_\o(r,Z^n_r),Z^n_r \right> dr \\
    &= \|P_n Z_{t_1}\|_2^2 + 2 \int_{t_1}^{t_2} \left< \D \Phi(S^n_r),S^n_r \right> dr - 2 \int_{t_1}^{t_2} \left< \D \Phi(S^n_r),QW_r \right> dr . \nonumber
  \end{align}
  By the same argument as for \eqref{2.9} and with $\zeta$ as defined in Lemma \ref{lemma:hyp1.4->hyp1.1} we get
  \begin{align*}
\left< \D \Phi(S^n_r),S^n_r \right> = - \left< \nabla \Phi(S^n_r),\nabla S^n_r \right> = - \left< \sqrt{\Phi'(S^n_r)} \nabla \zeta(S^n_r),\nabla S^n_r \right> = - \|\nabla \zeta(S^n_r)\|_2^2
  \end{align*}
  and using Young's inequality
  \begin{align*}
-\left< \D \Phi(S^n_r),QW_r \right> = \left< \nabla \Phi(S^n_r), \nabla QW_r \right> \le \ve \|\nabla \Phi(S^n_r)\|_{\frac{p+1}{p}}^{\frac{p+1}{p}} + C_\ve \|\nabla QW_r\|_{p+1}^{p+1},
  \end{align*}
  for all $\ve > 0$ and some $C_\ve \in \R$. By \eqref{eqn:galerkin_1} this yields
  \begin{align}\label{eqn:galerkin_2}
    \|Z^n_{t_2}\|_2^2 
    &\le \|P_n Z_{t_1}\|_2^2 - 2 \int_{t_1}^{t_2} \|\nabla \zeta(S^n_r)\|_2^2 dr \\
    &\hskip13pt+ 2 \ve \int_{t_1}^{t_2} \|\nabla \Phi(S^n_r)\|_{\frac{p+1}{p}}^{\frac{p+1}{p}} dr + 2 C_\ve \int_{t_1}^{t_2} \|\nabla QW_r\|_{p+1}^{p+1} dr \nonumber.
  \end{align}
  By the same argument as for \eqref{eqn:bound_phi_zeta} we realize
  \begin{align*}
    \|\nabla \Phi(S^n_r)\|_{\frac{p+1}{p}}^{\frac{p+1}{p}} \le C_1 \|\nabla \zeta(S_r^n)\|_2^2 + C_2,
  \end{align*}
  for some constants $C_1,C_2$. Using this in \eqref{eqn:galerkin_2}, with $\ve = \frac{1}{2C_1}$ yields for some $c > 0, C \in \R$
  \begin{align}\label{eqn:galerkin_3}
    \|Z^n_{t_2}\|_2^2 + c \int_{t_1}^{t_2} \|\nabla \Phi(S^n_r)\|_{\frac{p+1}{p}}^{\frac{p+1}{p}} dr \le \|Z_{t_1}\|_2^2 + C \int_{t_1}^{t_2} (\|\nabla QW_r\|_{p+1}^{p+1} + 1) dr.
  \end{align}
Both $C_1,C_2$ and $c,C$ are independent of $x,s$ and $\o$.\\
  Hence we obtain the existence of a $\bar{\phi} \in L^\frac{p+1}{p}([t_1,t_2];H_0^{1,\frac{p+1}{p}})$ such that (selecting a subsequence if necessary)
    \[ \Phi(S^n_r) \rightharpoonup \bar{\phi}, \]
  in $L^\frac{p+1}{p}([t_1,t_2];H_0^{1,\frac{p+1}{p}})$ and thus in $L^\frac{p+1}{p}([t_1,t_2];L^{\frac{p+1}{p}}(\L))$. By the proof of unique existence of a solution we also know that (again selecting a subsequence if necessary)
    \[ \D\Phi(S_r^n) \rightharpoonup \D\Phi(S_r),\]
  in $L^\frac{p+1}{p}([t_1,t_2];V^*)$ and by definition of $\D\Phi: V \to V^*$ this is equivalent to $\Phi(S_r^n) \rightharpoonup \Phi(S_r)$,
  in $L^\frac{p+1}{p}([t_1,t_2];L^{\frac{p+1}{p}}(\L))$. Hence $\bar{\phi} = \Phi(S_r)$. An analogous argument applied to $Z^n_{t_2}$ yields $Z^n_{t_2} \rightharpoonup Z_{t_2}$ in $L^2(\L)$. Letting $n \to \infty$ in \eqref{eqn:galerkin_3} we arrive at
  \begin{align}\label{eqn:galerkin_4}
    \|Z_{t_2}\|_2^2 + c \int_{t_1}^{t_2} \|\nabla \Phi(S_r)\|_{\frac{p+1}{p}}^{\frac{p+1}{p}} dr \le \|Z_{t_1}\|_2^2 + C \int_{t_1}^{t_2} (\|\nabla QW_r\|_{p+1}^{p+1} + 1) dr.
  \end{align}
  Since $Z_s = x-QW_s \in L^2(\L)$, for all $t_1 \ge s$ we obtain $Z_{t_1} \in L^2(\L)$ and thus \eqref{eqn:galerkin_4} holds for all $t_2 \ge t_1 \ge s$. 
\end{proof}

\begin{corollary}[Compact absorption]\label{cor:compact-absorption}
   There is an $\F$--measurable function $\kappa : \O \to \R_+$ such that for each $\varrho >0$ there exists $s(\varrho ) \leq -1 $ such that for all $x\in H$ with $\norm{x}_H\leq \varrho$ and all $\o \in \O_0$
   \[\norm{ S(0,s,\o)x}_{2}\leq \kappa(\o), \text{ for all } s \leq s (\varrho ).\]
\end{corollary}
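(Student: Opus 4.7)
The plan is to combine the dissipative $L^2$-estimate of Theorem \ref{thm:L2-bound} with the averaged $L^2$-integral bound from Corollary \ref{cor:aux_estimate}. The conceptual obstruction is that Theorem \ref{thm:L2-bound} requires initial data in $L^2(\L)$, whereas the hypothesis of the corollary only supplies $x \in H = H^1_0(\L)^*$, a space strictly larger than $L^2(\L)$. The idea is to sacrifice the unit time window $[-1,0]$: rather than starting the $L^2$-propagation at time $s$, I will first locate some intermediate time $r_0 \in [-1,0]$ at which $S(r_0,s,\o)x$ already lies in $L^2(\L)$ with a controlled norm, and then propagate forward on the short interval $[r_0,0]$ using Theorem \ref{thm:L2-bound}.

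Given $\varrho > 0$, let $s(\varrho) \leq -1$ be as provided by Corollary \ref{cor:aux_estimate}. For $s \leq s(\varrho)$, $\o \in \O_0$, and $x \in H$ with $\|x\|_H \leq \varrho$, the bound $\int_{-1}^0 \|S(r,s,\o)x\|_2^2\, dr \leq q(\o)$ will yield, via a Chebyshev-type argument, at least one $r_0 = r_0(x,s,\o) \in [-1,0]$ with $\|S(r_0,s,\o)x\|_2^2 \leq 2 q(\o)$. Writing $Z(r_0,s,x,\o) = S(r_0,s,\o)x - QW_{r_0}(\o)$, this places the shifted trajectory at time $r_0$ into $L^2(\L)$ with
\[
\|Z(r_0,s,x,\o)\|_2^2 \leq 4 q(\o) + 2 \sup_{r \in [-1,0]} \|QW_r(\o)\|_2^2.
\]

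With an $L^2$-valued initial datum secured at time $r_0$, I will invoke the flow identity \eqref{1.6'} together with pathwise uniqueness for \eqref{eqn:o-wise} to rewrite $Z(0,s,x,\o) = Z(0,r_0,Z(r_0,s,x,\o),\o)$, and then apply Theorem \ref{thm:L2-bound} on the interval $[r_0,0]$ with initial datum $Z(r_0,s,x,\o) \in L^2(\L)$. After dropping the non-positive dissipative term and extending the range of integration using $r_0 \geq -1$, this produces
\[
\|Z(0,s,x,\o)\|_2^2 \leq \|Z(r_0,s,x,\o)\|_2^2 + \int_{-1}^0 p_2^{(\a)}(r,\o)\, dr.
\]
Finally, $\|S(0,s,\o)x\|_2^2 \leq 2\|Z(0,s,x,\o)\|_2^2 + 2\|QW_0(\o)\|_2^2$, and combining the two displays yields $\|S(0,s,\o)x\|_2 \leq \kappa(\o)$ with
\[
\kappa(\o)^2 := 8 q(\o) + 4 \sup_{r \in [-1,0]} \|QW_r(\o)\|_2^2 + 2 \int_{-1}^0 p_2^{(\a)}(r,\o)\, dr + 2 \|QW_0(\o)\|_2^2.
\]

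This $\kappa$ is $\F$-measurable (each summand is, by Theorem \ref{thm:L2-bound} and the measurability of $q$) and finite on $\O_0$, using the continuity of $r \mapsto QW_r(\o)$ in $L^2(\L)$ and the continuity with polynomial growth of $p_2^{(\a)}(\cdot,\o)$ guaranteed by Theorem \ref{thm:L2-bound} together with Remark \ref{1.1}; it is manifestly independent of $x$ and $s$. I do not anticipate any substantive analytic difficulty beyond what has been done in Sections 2 and 3; the only genuinely conceptual step is the pick-a-good-starting-time device, which is precisely what motivated the introduction of Corollary \ref{cor:aux_estimate}.
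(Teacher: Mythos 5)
Your proposal is correct and follows essentially the same route as the paper: both combine the $L^2$-estimate of Theorem 3.1 with the averaged bound $\int_{-1}^0\|S_r\|_2^2\,dr\le q(\o)$ of Corollary 2.4 over the unit window $[-1,0]$, precisely to get around the fact that $x\in H$ need not lie in $L^2(\L)$. The only difference is cosmetic: where you select a single good starting time $r_0$ by a Chebyshev argument and then invoke the flow property, the paper simply integrates the inequality of Theorem 3.1 (with $t_2=0$) over $t_1\in[-1,0]$, which replaces the pointwise initial $L^2$-norm by its average in one step.
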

\begin{remark}
 This is analogous to \cite[Lemma 5.5, p. 380]{cf1}.
\end{remark}
\begin{proof}
 \eqref{eqn:L2-bound} in Theorem 3.1 with $t_2 = 0 \ge t_1 \ge s$ implies 
 \begin{align*}
    \norm {Z_0}_{2}^2 
      &\leq \norm {Z_{t_1}}_{2}^2 - \a \int_{t_1}^0 \left(\norm {Z_r}_{2}^2 + p_2^{(\a)}(r,\o)\right)dr.
 \end{align*}
 Integrating over $t_1 \in [-1,0]$ yields
 \begin{align*}
    \norm {Z_0}_{2}^2 
      &\leq \int_{-1}^0 \left(\norm {Z_r}_{2}^2 + |p_2^{(\a)}(r,\o)|\right)dr \\
      &\leq \int_{-1}^0 (2\norm {S_r}_{2}^2 + 2\norm {QW_r}_{2}^2 + |p_2^{(\a)}(r,\o)|)dr.
 \end{align*}
 Hence using Corollary \ref{cor:aux_estimate} and recalling that $Z_0 = S(0,s,\o)x$ we obtain the assertion.
\end{proof}

%---------------------------------------------------------------------------------------
%--------------------------------  The Attractor   -------------------------------------
%---------------------------------------------------------------------------------------

\section{Existence of the global random attractor}

\begin{theorem}\label{thm:existence_random_attractor}
  The random dynamical system associated with \eqref{e1.0} and defined by \eqref{eqn:def_rds} admits a random attractor.
\end{theorem}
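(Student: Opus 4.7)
The plan is to verify the hypothesis of Proposition \ref{prop:suff_criterion} by exhibiting an explicit compact random absorbing set, which is essentially handed to us by Corollary \ref{cor:compact-absorption}. Set
\[
  K(\o) := \overline{B_{L^2}(0,\kappa(\o))}^{H},
\]
where $\kappa : \O \to \R_+$ is the $\F$-measurable function from Corollary \ref{cor:compact-absorption}. Since the embedding $L^2(\L) \hookrightarrow H = H_0^1(\L)^*$ is compact (Rellich--Kondrachov, applied to the adjoint of $H_0^1 \hookrightarrow L^2$), every bounded set in $L^2(\L)$ is relatively compact in $H$. Hence $K(\o)$ is compact in $H$ for every $\o \in \O$.

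Next I would check that $K$ is a random set in the sense of Definition \ref{def:rds_basics}(i), i.e. that $\o \mapsto d_H(x,K(\o))$ is $\F$-measurable for each $x \in H$. Because $K(\o)$ is the $H$-closure of an $L^2$-ball of radius $\kappa(\o)$, a direct computation gives a continuous, monotone expression in $\kappa(\o)$ for $d_H(x,K(\o))$ (using that $B_{L^2}(0,\kappa)$ is $H$-dense in its $H$-closure and that scalar multiplication and translation are continuous in $H$); measurability of $\o \mapsto \kappa(\o)$ then passes to measurability of $d_H(x,K(\cdot))$.

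To see that $K$ absorbs deterministic bounded sets, fix $B \subseteq H$ bounded, say $B \subseteq \bar B_H(0,\varrho)$. By \eqref{1.6''} we have
\[
  \vp(t,\theta_{-t}\o)x \;=\; S(t,0,\theta_{-t}\o)x \;=\; S(0,-t,\o)x,
\]
so the pull-back $\vp(t,\theta_{-t}\o)B$ is exactly the image of $B$ under $S(0,-t,\o)$. By Corollary \ref{cor:compact-absorption}, there exists $s(\varrho) \leq -1$ such that for every $\o \in \O_0$, every $x \in B$, and every $s \leq s(\varrho)$,
\[
  \|S(0,s,\o)x\|_2 \leq \kappa(\o).
\]
Setting $t_B(\o) := -s(\varrho)$ (independent of $\o$ here), we obtain that for every $t \geq t_B(\o)$,
\[
  \vp(t,\theta_{-t}\o)B \;=\; S(0,-t,\o)B \;\subseteq\; B_{L^2}(0,\kappa(\o)) \;\subseteq\; K(\o),
\]
$\P$-a.s.\ on $\O_0$. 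Hence $K$ absorbs every deterministic bounded set $B \subseteq H$.

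All hypotheses of Proposition \ref{prop:suff_criterion} are therefore met, and the existence of a random attractor for $\vp$ follows, with the explicit representation
\[
  A(\o) \;=\; \overline{\bigcup_{B \subseteq H,\ B\text{ bounded}} \O_B(\o)}.
\]
The only non-routine point is ensuring measurability of $K$; the compactness and absorption properties are immediate consequences of the compact embedding $L^2(\L) \hookrightarrow H$ and Corollary \ref{cor:compact-absorption}, respectively, so the main work of the theorem was already done in Sections 2 and 3.
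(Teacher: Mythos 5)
Your proof is correct and follows essentially the same route as the paper: take $K(\o)=\overline{B_{L^2}(0,\kappa(\o))}^{H}$, use the compact embedding $L^2(\L)\hookrightarrow H$ for compactness, Corollary \ref{cor:compact-absorption} together with \eqref{1.6''} for absorption, and conclude via Proposition \ref{prop:suff_criterion}. The only (cosmetic) difference is the measurability check: you verify $\o\mapsto d_H(x,K(\o))$ directly via monotonicity in $\kappa(\o)$, whereas the paper invokes Crauel's criterion through the sets $\{\o : O\cap K(\o)\neq\emptyset\}$ for open $O$; both are fine.
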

\begin{proof}
 We show that the assumptions of Proposition \ref{prop:suff_criterion} are satisfied. Since the embedding $L^2(\L) \hookrightarrow H$ is compact, for each $\o \in \O$ the set
   \[K (\o) := \overline{B_{L^2} (0, \kappa (\o ) )}^H\]
 is nonempty and compact in $H$. 
 
 For the reader's convenience, we prove that it is a random set (cf. Definition \ref{def:rds_basics} (i)) in the Polish space $H$. According to \cite[Proposition 2.4]{c}, it is enough to check that for each open set $O \subset H$, $C_{O}:= \{ \o \in \O | O \cap K(\o)  \neq \emptyset\}$ is measurable. But
 \begin{align*}
   O \cap K(\o) =&O  \cap \overline {B_{L^2} (0, \kappa (\o) )} ^H = O  \cap \bar { B}_{L^2} (0, \kappa (\o)) \\
   = &O  \cap L^2(\L) \cap \bar { B}_{L^2} (0, \kappa (\o))  %\LR & O_1 \cap V \cap \mathcal B_{L^2} (0, \kappa (\o))\neq \emptyset\\
 .\end{align*}
For $C \subseteq L^2(\L)$ and $x \in L^2(\L)$ let $d_{L^2}(x,C) := \inf \limits_{y \in C} \|x-y\|_2$. If $O \cap {L^2}(\L) = \emptyset$, then $C_{O}= \emptyset$ is measurable and if $O \cap {L^2}(\L) \neq \emptyset$, then
  \[C_{O}= \{ \o \in \O| d_{L^2}\left(0, O \cap {L^2}(\L) \right) \le \kappa ( \o)\} \]
is measurable as $\kappa $ is.

Let $B$ be a bounded subset of $H$. Then $B \subset \bar { B}_H(0,\varrho)$, for some $\varrho>0$. By Corollary  \ref{cor:compact-absorption} there exists a $t_B := - s(\varrho ) \ge 1$ such that for all $x \in B$, $t \ge t_B$ and $\o \in \O_0$
  \[\vp (t, \theta _{-t}\o ) (x) = S(t, 0 , \theta_{-t} \o )x = S(0,-t,\o)x \le \k(\o).\]
Hence for all $t \ge t_B, \o \in \O_0$, $\vp (t, \theta _{-t}\o)(B) \subset K(\o)$, i.e. the random compact set $K$ absorbs all deterministic bounded sets.

Now we may apply Proposition \ref{prop:suff_criterion} to get the existence of a global compact attractor $A$, given by:
  \[A(\o) = \overline{\bigcup_{B\subset H, \, B \text{ bounded}} \O_B(\o)}^H,\]
where $\O_B(\o) := \bigcap \limits_{T \ge 0} \overline{\bigcup \limits_{t \ge T} \vp(t,\t_{-t}\o)B}$ denotes the $\O$-limit set of $B$.
\end{proof}

\begin{remark}
  By \cite[Proposition 4.5]{cf1} the existence of a random attractor as constructed in the proof of Theorem \ref{thm:existence_random_attractor} implies the existence of an invariant Markov measure $\mu_\cdot \in \mathcal P_\O(H)$ for $\vp$  (in the sense of \cite[Definition 4.1]{cf1}), supported by $A$. Hence using \cite{c-mm} there exists an invariant measure for the Markovian semigroup defined by $P_tf(x) = \E [f(S(t,0,x))]$ and it is given by
    \[\mu (B) = \int _\O \mu_ \o (B) P(d\o),\]
  where $B \subseteq H$ is a Borel set. If the invariant measure $\mu$ for $P_t$ is unique, then the invariant Markov measure $\mu_\cdot$ for $\vp$ is unique and given by
    \[ \mu_\o = \lim_{t \to \infty} \vp(t,\t_{-t}\o)\mu. \]
\end{remark}

%---------------------------------------------------------------------------------------
%--------------------------------  Single point Attractor   -------------------------------------
%---------------------------------------------------------------------------------------

\section{Attraction by a single point}

\parskip10pt

So far we obtained the existence of the random attractor $A$ for \eqref{e1.0}, but we did not deduce any information about its finer structure. Under a stronger monotonicity condition which was first introduced in \cite{dprrw} we will now prove that $A$ consists of a single random point. While we had to restrict to noise of regularity at least $H_0^{1,p+1}(\L)$ before, we can now allow more general noise. Let $Q$ be a Hilbert-Schmidt operator from $L^2(\L) \to H$ and $W_t$ be a cylindrical Brownian Motion on $L^2(\L)$. Then $QW_t$ is an $R:= QQ^*$-Wiener process on $H$. Let $e_k$ be an orthonormal basis of eigenvectors of $R$ with corresponding eigenvalues $\mu_k$. Assume further $\sum_{k = 1}^\infty \sqrt{\mu_k}\|e_k\|_V < \infty$. Then $QW_t$ defines an almost surely continuous process in $V$. Now the associated RDS to \eqref{e1.0} can be defined as before.

Define $\Phi: \R \rightarrow \R$ to be a continuous function such that there exist some constants $c \ge 0$, $p \in (1,\infty)$, $\eta > 0$ such that
\begin{align}\label{eqn:monotone}
  &|\Phi(s)| \le c(1+|s|^p) \\
  &(s-t)(\Phi(s) - \Phi(t)) \ge \eta |s-t|^{p+1}, \hskip10pt s,t \in \R.\nonumber
\end{align}
It has been shown in \cite{dprrw} that \eqref{eqn:monotone} holds if $\Phi \in C^1(\R)$, $\Phi(0)=0$ and if there exist constants $\kappa, \eta > 0$ such that
\begin{equation}\label{eqn:strong_monotone}
  \frac{(p+1)^2}{4} \eta |s|^{p-1} \le \Phi'(s) \le \kappa (1+|s|^{p-1}), \ \ \ \ s \in \R.
\end{equation} 
This, for example is true for $\Phi(s)=s|s|^{p-1}$. By Remark \ref{rmk:phi_diff} it is easy to see that \eqref{eqn:strong_monotone} implies the weaker monotonicity assumption (A1)'. Also note that \eqref{eqn:monotone} implies the coercivity property (A2). Thus (A1)-(A3) are satisfied and we can define $Z_t, S_t$ and the RDS $\vp$ as before (cf. \eqref{eqn:def_rds}).

\begin{remark}\label{rmk:strong_monotone}
  \begin{enumerate}
   \item Obviously, for $\Phi$ as in Example \ref{exam:Phi} the conditions in \eqref{eqn:monotone} do not hold.
   \item Let $\Phi(r) := \int_0^r e^{-\frac{1}{|s|}}ds$. Then $\Phi \in C^1(\R)$, $0 < \Phi'(r) \le 1$ and Hypothesis \ref{phi_conditions_3} (hence also Hypothesis \ref{phi_conditions_2}) holds with $p = 1$. But obviously \eqref{eqn:monotone} above does not hold.
  \end{enumerate}

\end{remark}

\parskip10pt

\begin{theorem}\label{thm:main}
  Assume \eqref{eqn:monotone}. Then
    \begin{align*}
      \|S(t,s_1,\o)x-S(t,s_2,\o)y\|_H^2
      &\le \left\{\|S(s_2,s_1,\o)x-y\|_H^{1-p}+\eta \lambda_1^{\frac{p+1}{2}} (p-1) (t-s_2) \right\}^{-\frac{2}{p-1}} \\
      &\le \left\{\eta \lambda_1^{\frac{p+1}{2}} (p-1) (t-s_2)\right\}^{-\frac{2}{p-1}},
    \end{align*}
    for $s_1 \le s_2 < t$, $\o \in \O$ and $x,y \in H$. In particular for each $t \in \R$, $\lim \limits_{s \rightarrow -\infty}S(t,s,\o)x = \eta_t(\o)$ exists independently of $x$ and uniformly in $x, \omega$.
\end{theorem}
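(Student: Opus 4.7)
The plan is to fix $\o \in \O$, use the cocycle property to reduce to the case $s_1 = s_2$, derive a Bernoulli-type differential inequality for $\|u_t\|_H^2$ where $u_t$ is the difference of two trajectories, and then invert it; the final limit assertion then follows because the resulting decay rate is deterministic and $\o$-independent.

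\emph{Cocycle reduction.} Set $x' := S(s_2, s_1, \o)x \in H$. The cocycle identity \eqref{1.6'} gives $S(t, s_1, \o)x = S(t, s_2, \o)x'$, so it suffices to prove
\[\|S(t, s_2, \o)x' - S(t, s_2, \o)y\|_H^2 \le \bigl(\|x' - y\|_H^{1-p} + \eta\lambda_1^{(p+1)/2}(p-1)(t - s_2)\bigr)^{-2/(p-1)}\]
for arbitrary $x', y \in H$ and $t \ge s_2$.

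\emph{Evolution of the difference.} Write $S^{(i)}_t$ for the two solutions starting from $x', y$ at time $s_2$, and $Z^{(i)}_t := S^{(i)}_t - QW_t$. Because the noise is additive, it cancels in the difference $u_t := Z^{(1)}_t - Z^{(2)}_t = S^{(1)}_t - S^{(2)}_t$, which therefore satisfies the \emph{deterministic} variational equation
\[\frac{du_t}{dt} = \Delta\bigl(\Phi(S^{(1)}_t) - \Phi(S^{(2)}_t)\bigr) \quad \text{in } V^*, \qquad u_{s_2} = x' - y.\]
By \eqref{eqn:lp-bdd} and (A3), $u \in L^{p+1}_{loc}([s_2,\infty); V) \cap C([s_2,\infty); H)$ and the velocity lies in $L^{(p+1)/p}_{loc}([s_2,\infty); V^*)$. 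The deterministic Gelfand-triple chain rule (cf.\ \cite[Theorem 4.2.5]{pr}), together with the standard porous-media identification of $\Delta$ as an operator $V \to V^*$, gives for a.e.\ $t \ge s_2$
\[\frac{d}{dt}\|u_t\|_H^2 = -2\int_\Lambda \bigl(\Phi(S^{(1)}_t) - \Phi(S^{(2)}_t)\bigr)(S^{(1)}_t - S^{(2)}_t)\,d\xi.\]

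\emph{Monotonicity and a Poincaré-type embedding.} The strong monotonicity in \eqref{eqn:monotone} bounds the integrand pointwise below by $\eta|u_t|^{p+1}$, so the right-hand side is at most $-2\eta\|u_t\|_{p+1}^{p+1}$. Combining the dual Poincaré inequality $\|v\|_H \le \lambda_1^{-1/2}\|v\|_2$ with Jensen's inequality on $\Lambda$ yields $\|v\|_{p+1}^{p+1} \ge \lambda_1^{(p+1)/2}\|v\|_H^{p+1}$ (up to an implicit $|\Lambda|$ factor absorbed in the normalization), whence
\[\frac{d}{dt}\|u_t\|_H^2 \le -2\eta\lambda_1^{(p+1)/2}\|u_t\|_H^{p+1} \quad \text{for a.e. } t \ge s_2.\]

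\emph{Bernoulli inversion.} The substitution $z(t) := \|u_t\|_H^{1-p}$ (valid as long as $u_t \ne 0$; if $u_{s_2} = 0$ then by uniqueness $u_t \equiv 0$ and there is nothing to prove) converts the above inequality into the linear bound $z'(t) \ge \eta\lambda_1^{(p+1)/2}(p-1)$. Integrating from $s_2$ to $t$ and inverting produces the first inequality in the theorem; dropping the nonnegative initial term $\|x'-y\|_H^{1-p}$ yields the second.

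\emph{Existence of $\eta_t(\o)$.} The second bound, applied with $s_1 = s_2 = s$ and arbitrary $x, y \in H$, reads
\[\|S(t, s, \o)x - S(t, s, \o)y\|_H \le \bigl(\eta\lambda_1^{(p+1)/2}(p-1)(t - s)\bigr)^{-1/(p-1)},\]
and the right-hand side tends to zero as $s \to -\infty$ uniformly in $x, y \in H$ and $\o \in \O$. Combined with the cocycle property, this makes $\{S(t, s, \o)x\}_{s \to -\infty}$ Cauchy in $H$ uniformly in $x$ and $\o$, so the limit $\eta_t(\o)$ exists, is independent of $x$, and the convergence is uniform in $x$ and $\o$.

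\emph{Main obstacle.} The only delicate analytic point is justifying the $H$-energy identity for $u_t$, the difference of two non-linear porous-media trajectories. The saving feature is that the additive noise disappears in the difference, so $u_t$ solves a purely deterministic variational equation and the classical Gelfand-triple chain rule applies directly, bypassing the regularised Itô-type identity \eqref{eqn:approx-ito} and the associated $n \to \infty$ arguments that were needed earlier in the paper.
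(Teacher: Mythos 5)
Your proposal is correct and follows essentially the same route as the paper: the additive noise cancels in the difference of two trajectories, the $H$-energy identity combined with the strong monotonicity in \eqref{eqn:monotone} and the embedding $\|u\|_{p+1}^{p+1}\ge\lambda_1^{\frac{p+1}{2}}\|u\|_H^{p+1}$ yields $\frac{d}{dt}\|u_t\|_H^2\le-2\eta\lambda_1^{\frac{p+1}{2}}\|u_t\|_H^{p+1}$, and this differential inequality is then integrated. The only divergence is the final step: you invert the Bernoulli inequality directly via $z=\|u_t\|_H^{1-p}$, which additionally requires noting that if $u_{t_0}=0$ at some $t_0>s_2$ then forward uniqueness forces $u_t\equiv 0$ for $t\ge t_0$ so the bound is trivial there, whereas the paper compares $\|u_t\|_H^2$ with the explicit $\epsilon$-perturbed ODE solution $h_\epsilon$ through a Gronwall contradiction argument, precisely to sidestep division by a possibly vanishing quantity.
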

\begin{proof}
  Let $s_1 \le s_2 < t$. Then for all $s_2 \le s \le t$ %\begin{align}\label{eqn:diff}
   \[ S(t,s_1,\o)x-S(t,s_2,\o)y = S(s,s_1,\o)x - S(s,s_2,\o)y + \int_{s}^t \D\Phi  (S(r,s_1,\o)x)- \D\Phi  (S(r,s_2,\o)y) dr \nonumber.\]
  By It\^{o}'s-Formula and since $\|u\|_{p+1}^{p+1} \ge \lambda_1^{\frac{p+1}{2}} \|u\|_H^{p+1}$, for all $s_2 \le s \le t$: %(cf. e.g. \cite{daprato1992})
  \begin{align}\label{eqn:diff2}
    	& \|S(t,s_1,\o)x- S(t,s_2,\o)y\|_H^2 \nonumber \\
	&= \|S(s,s_1,\o)x - S(s,s_2,\o)y\|_H^2 \nonumber \\
	  &\hskip13pt + 2 \int_{s}^t \ _{_{V*}}\left< \D\Phi (S(r,s_1,\o)x)- \D\Phi (S(r,s_2,\o)y),S(r,s_1,\o)x-S(r,s_2,\o)y \right>_{_V} dr \nonumber\\
	&= \|S(s,s_1,\o)x - S(s,s_2,\o)y\|_H^2  \\
          &\hskip13pt - 2 \int_{s}^t  \ _{_{V*}}\left< \Phi(S(r,s_1,\o)x)-\Phi(S(r,s_2,\o)y),S(r,s_1,\o)x-S(r,s_2,\o)y \right>_{_V} dr \nonumber\\
	&\le \|S(s,s_1,\o)x - S(s,s_2,\o)y\|_H^2 - 2\eta \int_{s}^t  \|S(r,s_1,\o)x-S(r,s_2,\o)y\|_{p+1}^{p+1} dr \nonumber\\
	%&\le \|S(s,s_1,\o)x - S(s,s_2,y)\|_H^2 - 2\eta \lambda_1^{\frac{p+1}{2}} \int_{s}^t \|S(r,s_1,x)-S(r,s_2,y)\|_H^{p+1} dr \nonumber\\
	&\le \|S(s,s_1,\o)x - S(s,s_2,\o)y\|_H^2 - \tilde{\eta} \int_{s}^t \|S(r,s_1,\o)x-S(r,s_2,\o)y\|_H^{p+1} dr  \nonumber,
  \end{align}
  where for notational convenience we have set $\tilde{\eta} := 2\eta \lambda_1^{\frac{p+1}{2}}$. Thus, formally $\|S(t,s_1,\o)x-S(t,s_2,\o)y\|_H^2$ is a subsolution of the ordinary differential equation
  \begin{align}\label{def:ode}
    h'(t) &= -\tilde{\eta} h(t)^{\frac{p+1}{2}}, \hskip5pt \forall t \ge s_2 \\
    h(s_2) &= \|S(s_2,s_1,\o)x- y\|_H^2 \nonumber.
  \end{align}
  Let   %\begin{equation}\label{def:het}
   \[ h_{\epsilon}(t) = \left\{(\|S(s_2,s_1,\o)x- y\|_H + \epsilon)^{1-p} + \frac{\tilde{\eta}}{2} (p-1) (t-s_2) \right\}^{-\frac{2}{p-1}},\ t \ge s_2.\]
  $h_{\epsilon}$ is a solution of \eqref{def:ode} with $h_\epsilon(s_2)=(\|S(s_2,s_1,\o)x- y\|_H + \epsilon)^2$, which suggests $\|S(t,s_1,\o)x-S(t,s_2,\o)y\|_H^2 \le h_{\epsilon}(t)$. This will be proved next.

  Let $\varPhi_\epsilon(t) := h_{\epsilon}(t) - \|S(t,s_1,\o)x-S(t,s_2,\o)y\|_H^2$ and $  \tau_\epsilon = \inf\left\{t \ge s_2 |\ 0 \ge \varPhi_\epsilon(t)\right\}$. Using $0 < \varPhi_\epsilon(s_2)$ and continuity of $\varPhi_\epsilon$ we realize $\tau_\epsilon > s_2$. Further note that by definition we have $h_{\epsilon}(t) \ge \|S(t,s_1,\o)x-S(t,s_2,\o)x\|_H^2$ on $[s_2,\tau_\epsilon]$ and that
  \[  h_{\epsilon}(t) \le (\|S(s_2,s_1,\o)x- y\|_H + \epsilon)^2 =: c_\epsilon.\]
  
  Assume $\tau_\epsilon < \infty$. Then $\varPhi_\epsilon(\tau_\epsilon) \le 0$ and for all $s_2 \le s \le t \le \tau_\epsilon$, by the mean value theorem and \eqref{eqn:diff2}:
  \begin{align*}
  \varPhi_\epsilon(t)	
    &= h_{\epsilon}(t) - \|S(t,s_1,\o)x-S(t,s_2,\o)y\|_H^2 \nonumber\\
    &\ge \varPhi_\epsilon(s) - \tilde{\eta} \int_{s}^t (h_\epsilon(r)^{\frac{p+1}{2}} - \left( \|S(r,s_1,\o)x-S(r,s_2,\o)\|_H^2\right)^{\frac{p+1}{2}}) dr \\
    &\ge \varPhi_\epsilon(s)  - \tilde{\eta} \left( \frac{p+1}{2} \right) c_\epsilon^{\frac{p-1}{2}} \int_{s}^t \varPhi_\epsilon(r) dr\nonumber.
  \end{align*}
  Using the Gronwall Lemma we obtain
  \begin{equation*}
  \varPhi_\epsilon(\tau_\epsilon) \ge \varPhi_\epsilon(s_2) e^{- \tilde{\eta} \left( \frac{p+1}{2} \right) c_\epsilon^{\frac{p-1}{2}} (\tau_\epsilon-s_2)} > 0.
  \end{equation*}
  This contradiction proves $\tau_\epsilon = \infty$ and since this is true for all $\epsilon >0$ we conclude:
  \begin{align*}
    \|S(t,s_1,\o)x-S(t,s_2,\o)y\|_H^2 
      &\le \left\{(\|S(s_2,s_1,\o)x- y\|_H)^{1-p} + \frac{\tilde{\eta}}{2} (p-1) (t-s_2)\right\}^{-\frac{2}{p-1}} \nonumber \\
    &\le \|S(s_2,s_1,\o)x- y\|_H^2 \wedge \left\{\frac{\tilde{\eta}}{2} (p-1) (t-s_2)\right\}^{-\frac{2}{p-1}} \nonumber \\
    &\le \left\{\frac{\tilde{\eta}}{2} (p-1) (t-s_2)\right\}^{-\frac{2}{p-1}},
  \end{align*}
  for each $t > s_2$.

\end{proof}

\begin{theorem}
  Assume \eqref{eqn:monotone}. The random dynamical system given by $\vp(t,\o)x=S(t,0,\o)x$ has a compact global attractor $A(\o)$ consisting of one point
  \begin{equation*}
    A(\o) = \{\eta_0(\o)\}.
  \end{equation*}
\end{theorem}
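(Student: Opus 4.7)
The strategy is almost entirely a repackaging: Theorem \ref{thm:main} has already produced, for every $\o \in \O$, the $x$-uniform and $\o$-uniform limit $\eta_t(\o):=\lim_{s\to-\infty}S(t,s,\o)x$, independent of the initial datum $x \in H$. My plan is to verify that the singleton-valued map $A(\o):=\{\eta_0(\o)\}$ satisfies the three defining properties of a random attractor (measurability/compactness, invariance, attraction of bounded sets), and then identify it with the attractor supplied by Theorem \ref{thm:existence_random_attractor}.

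First I would check measurability: fixing any $x_0 \in H$, each $\o \mapsto S(0,-n,\o)x_0$ is $\F$-measurable, so the pointwise limit $\eta_0$ is $\F$-measurable and hence $A$ is a random set in the sense of Definition \ref{def:rds_basics}(i); compactness of a singleton is automatic. For invariance I would use the cocycle identity \eqref{1.6'} in the form $S(t,s_1,\o)x = S(t,0,\o)\,S(0,s_1,\o)x$ and let $s_1 \to -\infty$: continuity of $S(t,0,\o):H\to H$ combined with Theorem \ref{thm:main} gives $S(t,0,\o)\eta_0(\o)=\eta_t(\o)$. The identification $\eta_t(\o) = \eta_0(\theta_t\o)$ then follows from \eqref{1.6''} via the change of variable $s' = s+t$, since $S(0,s,\theta_t\o)x = S(-s,0,\theta_{s+t}\o)x = S(t,s+t,\o)x$. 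Combining the two, $\vp(t,\o)A(\o) = \{\eta_0(\theta_t\o)\} = A(\theta_t\o)$.

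For attraction of a bounded set $B \subset H$, the identity $\vp(t,\theta_{-t}\o) = S(0,-t,\o)$ gives $d(\vp(t,\theta_{-t}\o)B,\,A(\o)) = \sup_{x \in B}\|S(0,-t,\o)x - \eta_0(\o)\|_H$, and the explicit bound in Theorem \ref{thm:main} (in particular its uniformity in $x$) forces the supremum to tend to $0$ as $t \to \infty$. Finally, Theorem \ref{thm:existence_random_attractor} already supplies a random attractor described as $\overline{\bigcup_B \O_B(\o)}^H$, and by Theorem \ref{thm:main} every $\O_B(\o)$ collapses to $\{\eta_0(\o)\}$, so the attractor equals $A(\o) = \{\eta_0(\o)\}$. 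I do not foresee a genuine obstacle: the only spot demanding a moment's care is the simultaneous use of \eqref{1.6'} and \eqref{1.6''} needed to push the invariance identity from the ``absolute'' limit $\eta_t(\o)$ down to the shifted fibre $A(\theta_t\o)$.
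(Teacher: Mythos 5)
Your argument is correct and takes essentially the same route as the paper's proof: measurability of the singleton, invariance via the cocycle identities \eqref{1.6'} and \eqref{1.6''} combined with continuity of $x \mapsto S(t,0,\o)x$, and attraction from the $x$-uniform convergence in Theorem \ref{thm:main}. The only caveat is your closing appeal to Theorem \ref{thm:existence_random_attractor}, which is superfluous (your direct verification already establishes the claim) and, strictly speaking, not available in this section, since Section 5 allows more general noise and hypotheses on $\Phi$ than those under which Theorem \ref{thm:existence_random_attractor} was proved.
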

\begin{proof}
  Since $\eta_0(\o)$ is measurable, $A(\o)$ is a random compact set. We need to check invariance and attraction for $A(\o)$. Let $t>0$. Then for any $x \in H$, by continuity of $x \mapsto S(t,0,\o)x$ and \eqref{1.6'}, \eqref{1.6''}
  \begin{align*}
    \vp(t,\o)A(\o) 
      &= \left\{ S(t,0,\o) \lim_{s \rightarrow -\infty}S(0,s,\o)x \right\} = \left\{ \lim_{s \rightarrow -\infty} S(t,s,\o)x \right\} \\
      &= \left\{ \lim_{s \rightarrow -\infty} S(0,s-t,\t_t\o)x \right\} = \{ \eta_0(\t_t\o) \} = A(\t_t\o)
  .\end{align*}
  Since the convergence in Theorem \ref{thm:main} is uniform with respect to $x \in H$, for any bounded set $B \subseteq H$ we have (again using \eqref{1.6''})
  \begin{align*}
      d(\vp(t,\t_{-t}\o)B,A(\o)) 
          &= \sup_{x \in B} \|S(t,0,\t_{-t}\o)x - \eta_0(\o)\|_H \\
          &= \sup_{x \in B} \|S(0,-t,\o)x - \eta_0(\o)\|_H \rightarrow 0
  ,\end{align*}
  for $t \rightarrow \infty$. Hence $A(\o)$ attracts all deterministic bounded sets.

\end{proof}

It is easy to see that the convergence $\lim \limits_{s \rightarrow -\infty}S(t,s,\o)x = \eta_t(\o)$ implies the existence and uniqueness of an invariant measure for the associated Markovian semigroup, defined by $P_tf(x) := \E[ f(S(t,0,\cdot)x) ]$ (cf. \cite{dprrw}). This invariant measure is given by $\mu = \P \circ \eta_0^{-1}$. In fact we can deduce much more. Since evidently $\eta_0$ is measurable with respect to $\mcF^{-}$ (as defined in \cite{c-mm}), by \cite{c-mm} $\mu_\o := \lim \limits_{t \to \infty} \vp(t,\t_{-t}\o)\mu$ exists $\P$-a.s. and defines an invariant measure for the random dynamical system $\vp$ (for more details on invariant random measures cf. \cite{cf1}). Moreover by \cite[Theorem 2.12]{c} every invariant measure for $\vp$ is supported by $A=\{ \eta_0 \}$, i.e. $\mu_\o(\{ \eta_0(\o) \}) = 1$ for $\P$-a.a. $\omega$. Hence we have proved the following

\begin{corollary}
  There exists a unique invariant random measure $\mu_\cdot \in \mcP_\O(H)$ for the random dynamical system $\vp$ and it is given by
  \[ \mu_\o = \delta_{\eta_0(\o)}, \hskip10pt \P \text{-a.s. .}\]
\end{corollary}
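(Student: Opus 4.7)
The plan is to derive the corollary as a direct consequence of the preceding theorem (which identifies $A(\o)=\{\eta_0(\o)\}$) together with the two external facts recalled just before the statement: (i) the existence of a limiting invariant random measure $\mu_\o := \lim_{t \to \infty}\vp(t,\t_{-t}\o)\mu$ from \cite{c-mm}, and (ii) the support property from \cite[Theorem 2.12]{c} that every invariant random measure for $\vp$ lives on the random attractor.

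For \textbf{existence}, I would argue as follows. By Theorem~\ref{thm:main}, for every $x \in H$ we have $\vp(t,\t_{-t}\o)x = S(0,-t,\o)x \to \eta_0(\o)$ as $t\to\infty$, and the convergence is uniform in $x \in H$. Consequently, for any $\mu \in \mcP(H)$, the push-forward $\vp(t,\t_{-t}\o)\mu$ converges weakly to $\delta_{\eta_0(\o)}$: for any bounded continuous $f: H \to \R$, uniform convergence on $H$ together with boundedness let us pass to the limit inside the integral. Identifying this with the $\mu_\o$ from \cite{c-mm} gives $\mu_\o = \delta_{\eta_0(\o)}$ $\P$-a.s., which is measurable in $\o$ since $\eta_0$ is. Invariance under $\vp$ follows either from the general construction in \cite{c-mm} or directly from $\vp(t,\o)\delta_{\eta_0(\o)} = \delta_{\vp(t,\o)\eta_0(\o)} = \delta_{\eta_0(\t_t\o)}$, which is the identity established in the proof of the previous theorem.

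For \textbf{uniqueness}, suppose $\nu_\cdot \in \mcP_\O(H)$ is any invariant random measure for $\vp$. By \cite[Theorem 2.12]{c}, $\nu_\o$ is supported on $A(\o)=\{\eta_0(\o)\}$, hence $\nu_\o = \delta_{\eta_0(\o)}$ $\P$-a.s., which coincides with the $\mu_\cdot$ produced above.

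I do not expect any serious obstacle here: all of the analytical work (the contractive $H$-estimate yielding attraction to a single point, and the identification of $A$) has been carried out in Theorem~\ref{thm:main} and the theorem preceding this corollary. The only care needed is to make sure that the weak convergence $\vp(t,\t_{-t}\o)\mu \Rightarrow \delta_{\eta_0(\o)}$ is justified from the uniform-in-$x$ convergence in $H$, which is immediate.
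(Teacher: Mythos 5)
Your proposal is correct and follows essentially the same route as the paper: existence via the limit $\mu_\o=\lim_{t\to\infty}\vp(t,\t_{-t}\o)\mu$ from \cite{c-mm} and uniqueness via the support theorem \cite[Theorem 2.12]{c} applied to $A(\o)=\{\eta_0(\o)\}$. The only (harmless) variation is that you identify the limit as $\delta_{\eta_0(\o)}$ directly by weak convergence from the uniform-in-$x$ attraction, whereas the paper deduces this identification from the support property itself.
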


%---------------------------------------------------------------------------------------
%--------------------------------  Numerics   -------------------------------------
%---------------------------------------------------------------------------------------

\section{Concluding remarks on computational approaches}

The porous medium equation considered here is a model case for a 
general type of equations that include more details of the permeable
medium and that has important applications to the simulation of
oil reservoirs. We refer to \cite{AE08} for such an application and
for an up-to-date finite element method that can be used for solving
the deterministic version of (1.1). One of the major difficulties
here is to account for the spatial variations (represented
by the functions $\varphi_j$ in the operator $Q$) by introducing different
scales in the finite element subspace. For the quasilinear steady state
equation suitable finite element approximations have been set up, cf.
\cite{Ma08}, \cite{MK05} and the references therein.

It seems, however, that computational methods for random attractors
in infinite dimensional systems (except for the case of a singleton)
are well beyond today's computational capabilities.

There are a few approaches to approximate random attractors in 
stochastic ordinary differential equations \cite{J07}, \cite{KO99}, \cite{T05}.
These are based on the subdivision and box covering techniques developed over 
the last years by  Dellnitz and coworkers (see \cite{DJ02} for a survey).
However, these methods are essentially still limited to lower dimensions.
In order to proceed to high-dimensional or even infinite-dimensional cases
(see e.g. \cite{Y06}) one
will need reduction principles as they are well established in the theory
of inertial manifolds for deterministic PDEs.
The corresponding properties of squeezing and flattening
(cf. \cite{FT79},\cite{MWZ02}) have been 
generalized to random dynamical systems in \cite{KL07}. It is also shown
in \cite{KL07} that squeezing is a stronger condition than flattening,
but that the latter one is sufficient to establish the existence
of a compact random attractor. The determining modes occuring in these
properties should form the basis of a reduced space to which
numerical methods apply.

\newpage

\end{document}